
\documentclass[a4paper,10pt]{amsart}
\usepackage{amsmath,amssymb,amsthm,amscd}
\usepackage[all]{xy}
\renewcommand{\iff}{if and only if }
\newcommand{\st}{such that }


\newcommand{\RMod}{\hbox{R{\rm -Mod}}}
\newcommand{\ModR}{\hbox{{\rm Mod-}R}}

\newcommand{\Pot}{\mathfrak{P}}

\newcommand{\Z}{\mathbb{Z}}
\newcommand{\Q}{\mathbb{Q}}
\DeclareMathOperator{\Hom}{Hom}

\DeclareMathOperator{\Ext}{Ext}
\DeclareMathOperator{\Tor}{Tor}
\DeclareMathOperator{\Ker}{Ker}
\DeclareMathOperator{\Img}{Im}



\usepackage[usenames]{color}

\theoremstyle{plain}
\newtheorem{thm}{Theorem}[section]
\newtheorem{prop}[thm]{Proposition}
\newtheorem{lem}[thm]{Lemma}

\theoremstyle{definition}
\newtheorem{defn}[thm]{Definition}
\newtheorem{exm}[thm]{Example}

\theoremstyle{remark}
\newtheorem{rem}{Remark}

\begin{document}
\title[Approximations, Mittag-Leffler conditions -- tools]{Approximations and Mittag-Leffler conditions \qquad the tools}

\author{\textsc{Jan \v Saroch}}
\address{Charles University, Faculty of Mathematics and Physics, Department of Algebra \\ 
Sokolovsk\'{a} 83, 186 75 Praha~8, Czech Republic}
\email{saroch@karlin.mff.cuni.cz}
 
\keywords{Mittag-Leffler conditions, approximations of modules, pure-injective module, Bass module, deconstructible class, countable type.}

\thanks{Research supported by grant GA\v CR 14-15479S}

\date{\today}

\begin{abstract} Mittag-Leffler modules occur naturally in algebra, algebraic geometry, and model theory, \cite{RG}, \cite{Gr}, \cite{P2}. If $R$ is a non-right perfect ring, then it is known that in contrast with the classes of all projective and flat modules, the class of all flat Mittag-Leffler modules is not deconstructible \cite{HT}, and it does not provide for approximations when $R$ has cardinality $\leq \aleph_0$, \cite{BS}. We remove the cardinality restriction on $R$ in the latter result. We also prove an extension of the Countable Telescope Conjecture \cite{SS}: a cotorsion pair $(\mathcal A,\mathcal B)$ is of countable type whenever the class $\mathcal B$ is closed under direct limits. 

In order to prove these results, we develop new general tools combining relative Mittag-Leffler conditions with set-theoretic homological algebra. They make it possible to trace the facts above to their ultimate, countable, origins in the properties of Bass modules. These tools have already found a number of applications: e.g., they yield a positive answer to Enochs' problem on module approximations for classes of modules associated with tilting \cite{AST2}, and enable investigation of new classes of flat modules occurring in algebraic geometry \cite{SlT2}. Finally, the ideas from Section~3 have led to the solution of
a long-standing problem due to Auslander on the existence of right almost split maps~\cite{S}.
\end{abstract} 

\maketitle
\vspace{4ex}

\section{Introduction}
\label{sec:intro}

The Mittag-Leffler condition for countable inverse systems was introduced by Grothendieck as a sufficient condition for the exactness of the inverse limit functor,~\cite{Gr}. Flat Mittag-Leffler modules then played an important role in the proof of the locality of the notion of a vector bundle in \cite[Seconde partie]{RG}. The renewed interest in Mittag-Leffler conditions stems from their role in the study of roots of the Ext functor, see \cite{ABH}, \cite{AH}, \cite{BH}, and \cite{SS}, as well as their connection to generalized vector bundles \cite{D}, \cite{EGPT}. In \cite{HT}, it was shown that flat Mittag-Leffler modules coincide with the $\aleph_1$-projective modules, studied already by Shelah et al.\ by  means of set-theoretic homological algebra, \cite{EM}. A connection to model theory was discovered in \cite{R}. The many facets of the notion of a Mittag-Leffler module, as well as its many applications, make it a key notion of contemporary algebra.      
 
Though locally projective, flat Mittag-Leffler modules have a very complex global structure in the case when $R$ is not right perfect. Similarly to the classes $\mathcal P_0$ and $\mathcal F_0$ of all projective and flat modules, the class $\mathcal F \mathcal M$ of all flat Mittag-Leffler modules is closed under transfinite extensions. However, unlike $\mathcal P_0$ and $\mathcal F_0$, it is not possible to obtain $\mathcal F \mathcal M$ by transfinite extensions from any set of its elements. In other words, $\mathcal F \mathcal M$ is not deconstructible, \cite{HT}. Moreover, unlike $\mathcal P_0$ and $\mathcal F_0$, the class $\mathcal F \mathcal M$ does not provide for approximations when $R$ is countable, \cite{BS}. Further results of this kind have been proved in \cite{SlT}. All these results, however, have the drawback of assuming that either the ring or the modules satisfy restrictive conditions on their size or structure. 

\medskip
Our goal here is to remove these restrictions. We succeed in doing so in the case of Mittag-Leffler modules by proving that $\mathcal F \mathcal M$ is not precovering for an arbitrary non-right perfect ring (Theorem \ref{t:nonprec}). 

However, our key tool, Lemma \ref{l:nonprec}, is much more general. It makes it possible to test for non-existence of approximations using certain small, countably presented modules, called the Bass modules. The terminology comes from the prototype example of a Bass module, namely the direct limit of the countable system  
$$R \overset{f_0}\to R \overset{f_1}\to \dots \overset{f_{n-1}}\to R \overset{f_n}\to R \overset{f_{n+1}}\to \dots$$  
where $f_n$ is the left multiplication by $a_n$ in $R$, and $Ra_0 \supseteq Ra_1a_0 \supseteq \dots$ is a decreasing chain of principal left ideals in $R$. By a classic result of Bass, the direct limit is projective, if and only if the chain stabilizes. So non-right perfect rings are characterized by the existence of such Bass modules that are not projective. Recently, Lemma \ref{l:nonprec} has been applied to a number of diverse settings: e.g., to locally $T$-free modules in \cite{AST2}, and locally very flat modules in \cite{SlT2}.   

There are other general tools developed here that are of independent interest. We prove that every cotorsion pair $(\mathcal A,\mathcal B)$ where $\mathcal B$ is closed under direct limits is a complete cotorsion pair of countable type with $\mathcal B$ actually being definable (Theorem \ref{t:counttype}). This is a non-hereditary version of the Countable Telescope Conjecture for module categories from \cite[Theorem 3.5]{SS}.

\medskip
This paper is organized as follows. After some preliminaries in Section 2, we start out in Section 3 by discussing the case of flat Mittag-Leffler modules.  Sections 4 and 5 are devoted to relative Mittag-Leffler conditions and their role in connections with vanishing of Ext and pure-injectivity. In Section 6 we consider  cotorsion pairs $(\mathcal A,\mathcal B)$ where $\mathcal B$ is closed under direct limits  and prove that they are of countable type. 

\section{Preliminaries}
\label{sec:prelim}

Let $R$ be an (associative unital) ring. We denote by $\ModR$ the category of all (right $R$-) modules.

\subsection{Bass modules.} Given a class $\mathcal C$ of countably presented modules, we call a module $M$ a \emph{Bass module} over $\mathcal C$, provided that $M$ is the direct limit of a countable direct system  
$$C_0 \overset{f_0}\to C_1 \overset{f_1}\to \dots \overset{f_{n-1}}\to C_n \overset{f_n}\to C_{n+1} \overset{f_{n+1}}\to \dots$$
where $C_n \in \mathcal C$ for each $n < \omega$. All Bass modules over $\mathcal C$ are countably presented; in fact, they are just the countable direct limits of the modules from $\mathcal C$, cf.\ \cite[\S5]{SlT}.

\subsection{Pure-injective modules.} For a (left, right) module $M$, we denote by $M^c = \Hom _{\Z}(M,\Q/\Z)$ the \emph{character} (right, left) module of $M$. A module $M$ is \emph{pure-injective} provided that the canonical embedding of $M$ into $M^{cc}$ splits. 
The pure-injective hull of a module $M$ will be denoted by $PE(M)$. 

 
\subsection{Roots of Ext.} For a class of modules $\mathcal C \subseteq \ModR$, we define its left and right Ext-orthogonal classes by ${}^{\perp} \mathcal C = \Ker\Ext ^1_R(-,\mathcal C)$ and $\mathcal C ^{\perp} = \Ker\Ext ^1_R(\mathcal C,-)$. 
For $\mathcal C = \ModR$, we have ${}^{\perp} \mathcal C = \mathcal P _0$ and $\mathcal C ^{\perp} = \mathcal I _0$, the classes of all projective and injective modules, respectively. 

\subsection{Deconstructible classes.} Given a module $M$ and an ordinal number $\sigma$, we call an ascending chain $\mathcal M = (M_\alpha \mid \alpha \leq \sigma)$ of submodules of $M$ a \emph{filtration} of $M$, if $M_0 = 0$, $M_\sigma = M$, and $\mathcal M$ is \emph{continuous}---that is, $\bigcup _{\alpha < \beta} M_\alpha = M_\beta$ for each limit ordinal $\beta\leq\sigma$. Moreover, given a class of modules $\mathcal C$, we call $\mathcal M$ a \emph{$\mathcal C$-filtration} of $M$, provided that each of the consecutive factors $M_{\alpha+1}/M_\alpha$ ($\alpha<\sigma$) is isomorphic to a module from $\mathcal C$. A module $M$ is \emph{$\mathcal C$-filtered}, if it admits a $\mathcal C$-filtration. 

Given a class $\mathcal C$ and a cardinal $\kappa$, we use $\mathcal C^{\leq\kappa}$ and $\mathcal C^{<\kappa}$ to denote the subclass of $\mathcal C$ consisting of all $\leq \kappa$-presented and $<\kappa$-presented modules, respectively. 

Let $\kappa$ be an infinite cardinal. A class of modules $\mathcal C$ is \emph{$\kappa$-deconstructible} provided that each module $M \in \mathcal C$ is $\mathcal C^{<\kappa}$-filtered. For example, the class $\mathcal P _0$ is $\aleph_1$-deconstructible by a classic theorem of Kaplansky. A class $\mathcal C$ is \emph{deconstructible} in case it is $\kappa$-deconstructible for some infinite cardinal $\kappa$. 

\subsection{Cotorsion pairs.}
A pair of classes of modules $\mathfrak C = (\mathcal A,\mathcal B)$ is a \emph{cotorsion pair} provided that $\mathcal A = {}^\perp \mathcal B$ and $\mathcal B = \mathcal A ^\perp$. The class $\Ker \mathfrak C = \mathcal A \cap \mathcal B$ is the \emph{kernel} of $\mathfrak C$. Notice that the class $\mathcal A$ is always closed under arbitrary direct sums and contains $\mathcal P _0$. Dually, the class $\mathcal B$ is closed under direct products and it contains $\mathcal I _0$. If moreover $\Ext ^2_R(A,B) = 0$ for all $A \in \mathcal A$ and $B \in \mathcal B$, then $\mathfrak C$ is a \emph{hereditary} cotorsion pair. The latter is equivalent to $\mathcal A$ being closed under kernels of epimorphisms. 

For example, for each $n < \omega$, there exist hereditary cotorsion pairs of the form $(\mathcal P _n, \mathcal P _n ^\perp)$ and $(^\perp \mathcal I _n, \mathcal I _n)$, where $\mathcal P _n$ and $\mathcal I _n$ denote the classes of all modules of projective and injective dimension $\leq n$, respectively.      

The cotorsion pair $\mathfrak C$ is said to be \emph{generated} by a class $\mathcal S$ provided that $\mathcal B = \mathcal S ^\perp$. In this case, if $\kappa$ is an infinite regular cardinal such that each module in $\mathcal S$ is $< \kappa$-presented, then $\mathcal A$ is $\kappa$-deconstructible. If $\mathfrak C$ is generated by a class $\mathcal S$ consisting of countably (finitely) generated modules with countably (finitely) presented first syzygies, then $\mathfrak C$ is said to be \emph{of countable (finite) type}\footnote{In the literature, the requirement is usually extended from the first one to all syzygies. However, this definition does not seem reasonable for non-hereditary cotorsion pairs, i.e. the ones we want to investigate. Our definition fits nicely into the crucial Theorem~\ref{t:counttype}, while it still ensures that finite type of $\mathfrak C$ implies definability of $\mathcal B$.}. If $R$ is right coherent, then it is equivalent to the statement `$\mathfrak C$ is generated by a class consisting of countably (finitely) presented modules'. Dually, $\mathfrak C$ is said to be \emph{cogenerated} by a class $\mathcal C$ provided that $\mathcal A = {}^\perp \mathcal C$.

\subsection{Approximations.}
A class $\mathcal C$ of modules is \emph{precovering}, if for each module $M$ there exists a morphism $f \in \Hom _R(C,M)$ with $C \in \mathcal C$, such that each morphism $f^\prime \in \Hom _R(C^\prime,M)$ with $C^\prime \in \mathcal C$ factors through $f$. Such $f$ is called a \emph{$\mathcal C$-precover} of the module $M$. 

A precovering class of modules $\mathcal C$ is called \emph{special precovering} provided that each module $M$ has a $\mathcal C$-precover $f : C \to M$ which is surjective and satisfies $\mbox{Ker} (f) \in \mathcal C^\perp$. Moreover, $\mathcal C$ is called \emph{covering} provided that each module $M$ has a $\mathcal C$-precover $f : C \to M$ with the following minimality property:  $g$ is an automorphism of $C$, whenever $g : C \to C$ is an endomorphism of $C$ with $fg = f$. Such $f$ is called a \emph{$\mathcal C$-cover} of $M$. Dually, we define \emph{preenveloping}, \emph{special preenveloping}, and \emph{enveloping} classes of modules. 

We will also deal with preenveloping and precovering classes in the category of all finitely presented modules -- these classes are usually called \emph{covariantly finite} and \emph{contravariantly finite}, respectively.  

A cotorsion pair $\mathfrak C = (\mathcal A, \mathcal B)$ is called \emph{complete}, provided that $\mathcal A$ is a special precovering class (or, equivalently, $\mathcal B$ a special preenveloping class). 
For example, each cotorsion pair generated by a set of modules is complete. Also, every deconstructible class of modules is precovering provided it is closed under transfinite extensions. Further, $\mathfrak C$ is \emph{closed}, provided that $\mathcal A = \varinjlim \mathcal A$. If $\mathfrak C$ is closed and complete, then $\mathcal A$ is a covering class in $\ModR$, cf.\ \cite[Part II]{GT}.

\subsection{Locally free modules.}
The following less well known notation from \cite{HT} and \cite{SlT} will be convenient:

\begin{defn}\label{d:dense} Let $R$ be a ring and $\lambda$ an infinite regular cardinal. 

A system $\mathcal S$ consisting of $<\!\!\lambda$-presented submodules of a module $M$ satisfying the conditions 
\begin{enumerate}
\item $\mathcal S$ is closed under unions of well-ordered ascending chains of length $<\lambda$, and
\item each subset $X \subseteq M$ \st $\vert X\vert < \lambda$ is contained in some $N \in \mathcal S$,
\end{enumerate}
is called  a \emph{$\lambda$-dense system} of submodules of $M$.
\end{defn}

Of course, $M$ is then the directed union of these submodules.

\smallskip
Let $\mathcal F$ be a set of countably presented modules. Denote by $\mathcal C$ the class of all modules possessing a countable $\mathcal F$-filtration. 
A module $M$ is \emph{locally $\mathcal F$-free} provided that $M$ contains an $\aleph _1$-dense system of submodules from $\mathcal C$. 
Notice that if $M$ is countably presented, then $M$ is locally $\mathcal F$-free, if and only if $M \in \mathcal C$. 

\smallskip
We will mainly be interested in the case when $\mathcal F=\mathcal A^{\le \omega}$ for a cotorsion pair $\mathfrak C=(\mathcal A,\mathcal B)$. Then $\mathcal C= \mathcal A^{\le \omega}$, and a module is locally $\mathcal A^{\le \omega}$-free if and only if it admits an $\aleph _1$-dense system of countably presented submodules from $\mathcal A$.  For a different description of  locally $\mathcal A^{\le \omega}$-free modules, see \cite[Theorem 4.4]{AST2}.

We note that by \cite{HT}, the locally $\mathcal P _0^{\le \omega}$-free modules coincide with the flat Mittag-Leffler modules from Definition \ref{d:ML} below. 

\subsection{Filter-closed classes.} 
For every directed set $(I,\leq)$, there is an \emph{associated filter} $\mathfrak F_I$ on $(\Pot(I),\subseteq)$, namely the one with a basis consisting of the upper sets $\uparrow\! i = \{j \in I \mid j \geq i \}$ for all $i \in I$, that is
$$ \mathfrak F_I = \{X \subseteq I \mid (\exists i \in I)(\uparrow\! i \subseteq X) \}. $$

\begin{defn} \label{def:f_prod}
Let $\mathfrak F$ be a filter on the power set $\Pot(X)$ for some set $X$, and let $\{M_x\mid x\in X\}$ be a set of modules. Set $M = \prod_{x\in X}M_x$. Then the \emph{$\mathfrak F$-product} $\Sigma_\mathfrak{F} M$ is the submodule of $M$ \st
$$ \Sigma_\mathfrak{F} M = \{m \in M \mid z(m) \in \mathfrak{F} \} $$
where for an element $m = (m_x\mid x \in X) \in M$, we denote by $z(m)$ its zero set $\{x \in X \mid m_x = 0\}$. 

For example, if $\mathfrak F = \Pot(X)$ then $\Sigma_\mathfrak{F} M = M$ is just the direct product, while if $\mathfrak F$ is the Fr\' echet filter on $X$ then $\Sigma_\mathfrak{F} M = \bigoplus_{x \in X} M_x$ is the direct sum.   
The module $M / \Sigma_\mathfrak{F} M$ is called an \emph{$\mathfrak F$-reduced product}. Note that for $a,b \in M$, we have the equality $\bar a = \bar b$ in the $\mathfrak F$-reduced product \iff $a$ and $b$ agree on a set of indices from the filter $\mathfrak F$. 

In the case that $M_x = M_y$ for every pair of elements $x,y\in X$, we speak of an \emph{$\mathfrak F$-power} and an \emph{$\mathfrak F$-reduced power} (of the module $M_x$) instead of an $\mathfrak F$-product and an $\mathfrak F$-reduced product, respectively. If moreover $\mathcal F$ is an ultrafilter on $X$, then 
the $\mathfrak F$-reduced power is called an \emph{ultrapower} of $M_x$. 

Finally, a nonempty class of modules $\mathcal G$ is called \emph{filter-closed}, if it is closed under arbitrary $\mathfrak F$-products (for any set $X$ and any arbitrary filter $\mathfrak F$ on $\Pot (X)$). 
Notice that a class of modules is filter-closed in case it is closed under direct products and direct limits (of direct systems consisting of monomorphisms), see \cite[Lemma 3.3.1]{P2}.
\end{defn}

\section{Flat Mittag-Leffler modules and approximations}
\label{sec:prototype}

We start with the notion of a flat Mittag-Leffler module studied since the classic works of Grothendieck \cite{Gr} and Raynaud-Gruson \cite{RG}. From its many facets, we choose the one involving tensor products for a definition: 

\begin{defn}\label{d:ML} Let $R$ be a ring. A module $M$ is \emph{flat Mittag-Leffler} provided that the functor $M \otimes _R - : \RMod \to \hbox{Mod-}{\mathbb Z}$ is exact, and the canonical group homomorphism $\varphi : M \otimes _R \prod_{i \in I} Q_i \to \prod_{i \in I} M \otimes_R Q_i$ defined by $\varphi (m \otimes_R (q_i)_{i \in I}) = (m \otimes_R q_i)_{i \in I}$ is monic for each family of left $R$-modules $( Q_i \mid i \in I )$. 

The class of all flat Mittag-Leffler modules will be denoted by $\mathcal F \mathcal M$.  
\end{defn}   

If $R$ is a right perfect ring, then flat Mittag-Leffler modules coincide with the projective modules, and form a covering class of modules by a classic theorem by Bass.   
However, if $R$ is not right perfect (e.g., if $R$ is right noetherian, but not right artinian), then the class $\mathcal F \mathcal M$ is not deconstructible \cite[Corollary 7.3]{HT}.

In \cite[Theorem 1.2(iv)]{ST}, the Singular Cardinal Hypothesis (SCH) was used to prove that $\mathcal F \mathcal M$ is not even  precovering when $R$ has cardinality $\leq \aleph_0$. The assumption of SCH was removed in \cite{BS}. 

Our goal in this section is to remove the cardinality restriction on $R$ as well, and thus to obtain  a basic example of a non-precovering class of modules over an arbitrary non-right perfect ring $R$.  

\medskip
The following lemma is formulated for the more general setting of locally $\mathcal F$-free modules (see Definition \ref{d:dense}): 
 
\begin{lem} \label{l:nonprec} Let $\mathcal F$ be a class of countably presented modules, and $\mathcal L$ the class of all locally $\mathcal F$-free modules. Let $N$ be a Bass module over $\mathcal F$. Assume that $N$ is not a direct summand in a module from $\mathcal L$. Then $N$ has no $\mathcal L$-precover.
\end{lem}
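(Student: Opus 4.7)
The plan is to argue by contraposition: assuming $f \colon L \to N$ is an $\mathcal{L}$-precover, I shall construct some $L^{*} \in \mathcal{L}$ having $N$ as a direct summand, contradicting the hypothesis.

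Since $L \in \mathcal{L}$, it carries an $\aleph_{1}$-dense system $\mathcal{S}$ of countably presented submodules lying in the class $\mathcal{C}$ of countably $\mathcal{F}$-filtered modules. Writing the Bass module as $N = \varinjlim C_{n}$ with structure maps $u_{n} \colon C_{n} \to N$, the precovering property furnishes, for each $n$, a lifting $g_{n} \colon C_{n} \to L$ with $f g_{n} = u_{n}$. The countably generated submodule $\sum_{n} g_{n}(C_{n}) \subseteq L$ is contained in some $L_{0} \in \mathcal{S}$ by condition~(2) of Definition~\ref{d:dense}. As the images of the $u_{n}$ exhaust $N$, the restriction $f|_{L_{0}} \colon L_{0} \to N$ is already surjective, and $L_{0} \in \mathcal{C} \subseteq \mathcal{L}$. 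Thus, after replacing $L$ by $L_{0}$, I may assume $L$ itself is countably presented and $\mathcal{F}$-filtered.

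The second and main step upgrades surjectivity of $f$ into a splitting at the cost of enlarging $L_{0}$ within $\mathcal{S}$. The chosen liftings $g_{n}$ need not commute with the connecting maps $f_{n}$: the discrepancies $g_{n+1} f_{n} - g_{n}$ are maps $C_{n} \to \ker f$ and they obstruct the existence of a section $N \to L$. I inductively construct a countable chain $L_{0} \subseteq L_{1} \subseteq \cdots$ in $\mathcal{S}$ together with updated liftings $g_{n}^{(k)} \colon C_{n} \to L_{k}$ satisfying $f g_{n}^{(k)} = u_{n}$, where at each stage the precover property of $f$ is invoked to kill the obstructions by modifying the $g_{n}^{(k)}$ by elements of $\ker f$, and condition~(2) of Definition~\ref{d:dense} is used to absorb the modifications into a larger member of $\mathcal{S}$. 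Condition~(1) ensures that $L^{*} := \bigcup_{k} L_{k}$ again lies in $\mathcal{C} \subseteq \mathcal{L}$. Passing to the colimit of the now-compatible liftings produces, by the universal property of $\varinjlim C_{n} = N$, a section $\sigma \colon N \to L^{*}$ of $f|_{L^{*}}$; hence $N$ is a direct summand of $L^{*} \in \mathcal{L}$, the desired contradiction.

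The technical heart of the argument is the inductive correction step: one must show that every obstruction $g_{n+1} f_{n} - g_{n} \in \Hom(C_{n}, \ker f)$ can actually be annihilated by modifying $g_{n+1}$ within an enlarged $L_{k+1} \in \mathcal{S}$. This is where the countable directed-limit structure of the Bass module $N$, the precovering property of $f$, and the $\aleph_{1}$-density of $\mathcal{S}$ must interact harmoniously; engineering the bookkeeping so that no obstruction survives to the limit is the main subtlety of the lemma.
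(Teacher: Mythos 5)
Your approach differs fundamentally from the paper's, and it has a genuine gap at the step you yourself call ``the technical heart of the argument.'' You start from a putative precover $f\colon L\to N$, reduce to a countably presented $L_0$ in the dense system through which $f$ is still onto, and then propose to inductively adjust the liftings $g_n\colon C_n\to L$ by elements of $\ker f$ until they become compatible with the structure maps $f_n$ of the Bass system, yielding a section $\sigma\colon N\to L^*$. The problem is that the ``obstruction-killing'' step is never justified and cannot be justified with the tools you invoke. The precover property asserts only that for each $C_n$ \emph{some} lifting $g_n$ exists with $fg_n=u_n$; it gives no control over which lifting is produced and in particular does not let you prescribe the restriction $g_{n+1}\restriction C_n$. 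The discrepancies $\delta_n = g_n - g_{n+1}f_n$ land in $\Hom_R(C_n,\ker f)$, and building compatible liftings amounts to solving the telescoping equations $h_{n+1}f_n - h_n = \delta_n$, i.e.\ to showing that $(\delta_n)_n$ dies in $\varprojlim^1\Hom_R(C_n,\ker f)$. Neither the precover property nor the $\aleph_1$-density of $\mathcal S$ addresses this $\varprojlim^1$ obstruction, which is real in general (indeed, the whole content of the lemma is to rule out the existence of a precover under the hypotheses, so a direct ``just build the section'' strategy is begging the question).

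The paper avoids this entirely by a cardinality argument. It invokes \cite[Lemma~5.6]{SlT} to produce a short exact sequence $0\to D\to L\to N^{(2^\kappa)}\to 0$ with $L\in\mathcal L$ and $D$ a direct sum of only $\kappa$ modules from $\mathcal F$, applies $\Hom_R(-,m)$ (where $m$ is the inclusion of the kernel of the precover) to this sequence, and uses that $\Ext^1_R(L,m)$ is monic (by the precover property) to deduce $\Ker(\Ext^1_R(N,m))^{2^\kappa}\subseteq\Img(\delta)$. Since $|\Img(\delta)|\le|\Hom_R(D,M)|\le 2^\kappa < 2^{2^\kappa}$, the kernel of $\Ext^1_R(N,m)$ must be trivial, forcing $f$ to split. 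This counting argument is the ingredient that replaces your unsubstantiated obstruction-killing step; without something of this sort your proposal does not close.
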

\begin{proof} As the direct limit, $N$ is an epimorphic image of a direct sum of modules from~$\mathcal F$. Since any direct sum of modules from $\mathcal F$ belongs to $\mathcal L$ (cf. \cite[Lemma~4.2]{SlT}), all the potential $\mathcal L$-precovers of $N$ have to be surjective.

For the sake of contradiction, let us assume that
$$0 \longrightarrow M \overset{m}\longrightarrow A \overset{f}\longrightarrow N \longrightarrow 0$$
is a short exact sequence where $f$ is an $\mathcal L$-precover of $N$. Let $\kappa$ be an infinite cardinal \st $|R|\leq\kappa$ and $|M|\leq 2^\kappa = \kappa ^\omega$. By \cite[Lemma 5.6]{SlT}, there exists a short exact sequence 
\begin{equation}\label{e:1} 
0 \longrightarrow D \overset{\subseteq}\longrightarrow L \longrightarrow {N^{(2^\kappa)}} \longrightarrow 0 
\end{equation}
such that $L \in \mathcal L$ and $D$ is a direct sum of $\kappa$ modules from $\mathcal F$.

Applying $\Hom _R(-,m)$ to (\ref{e:1}), we obtain the following commutative diagram with exact rows
$$\begin{CD}
	\Hom_R(D,M)	@>{\delta}>>	\Ext_R^1(N,M)^{2^\kappa}  @>>> \Ext_R^1(L,M)  \\ 
	 @V{\Hom_R(D,m)}VV	@V{\Ext_R^1(N,m)^{2^\kappa}}VV  @V{\Ext_R^1(L,m)}VV \\
  \Hom_R(D,A)  @>>>	\Ext_R^1(N,A)^{2^\kappa}  @>>> \Ext_R^1(L,A).
\end{CD}$$

Since $L\in\mathcal L$ and $f$ is an $\mathcal L$-precover, the map $\Ext_R^1(L,m)$ is monic. It follows that $\Ker(\Ext_R^1(N,m))^{2^\kappa}\subseteq \Img(\delta)$. However, by our assumption, $2^\kappa\geq|\Hom_R(D,M)|\geq|\Img(\delta)|$, and so $\Ext_R^1(N,m)$ has to be monic as well. The latter is equivalent to $\Hom_R(N,f)$ being onto. In particular, $f$ splits---a contradiction.
\end{proof}

Now, we can easily prove our first main result:

\begin{thm} \label{t:nonprec} Let $R$ be an arbitrary ring. Then $\mathcal F \mathcal M$ is precovering, if and only if $R$ is right perfect. \end{thm}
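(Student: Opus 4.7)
The ``if'' direction is immediate: if $R$ is right perfect, then by Bass's theorem every flat module is projective, so $\mathcal F \mathcal M = \mathcal P_0$, which is (pre)covering via projective covers.

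For the ``only if'' direction, assume $R$ is not right perfect. The plan is to apply Lemma~\ref{l:nonprec} with $\mathcal F := \mathcal P_0^{\le\omega}$, the class of countably generated projective modules (these are countably presented), so that by the remark at the end of the Locally free modules subsection, $\mathcal L$ equals $\mathcal F \mathcal M$. By the classical characterization of non-right perfect rings recalled in the introduction, I can choose a strictly decreasing chain $Ra_0 \supsetneq Ra_1 a_0 \supsetneq \cdots$ of principal left ideals, and let $N$ be the direct limit of
$$R \xrightarrow{\,f_0\,} R \xrightarrow{\,f_1\,} R \xrightarrow{\,f_2\,} \cdots$$
where $f_n$ is left multiplication by $a_n$. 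Then $N$ is a Bass module over $\mathcal F$, and Bass's theorem guarantees that $N$ is not projective.

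The key verification for the hypothesis of Lemma~\ref{l:nonprec} is that $N$ is not a direct summand of any module $L \in \mathcal F \mathcal M$. Since $\mathcal F \mathcal M$ is visibly closed under direct summands (both exactness of $-\otimes_R -$ and injectivity of the canonical $\varphi$ transfer to summands), such an $L$ would force $N \in \mathcal F \mathcal M$. However, $N$ is countably presented, and a classical theorem of Raynaud--Gruson asserts that every countably presented flat Mittag-Leffler module is projective. This contradicts the non-projectivity of $N$. Hence $N$ is not a summand of any member of $\mathcal F \mathcal M = \mathcal L$, and Lemma~\ref{l:nonprec} yields that $N$ has no $\mathcal F \mathcal M$-precover, so $\mathcal F \mathcal M$ fails to be precovering.

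The only nontrivial input beyond Lemma~\ref{l:nonprec} itself is the Raynaud--Gruson fact that countably presented flat Mittag-Leffler modules are projective; this is the step I expect to cite rather than reprove, and it is the conceptual crux that turns ``$N$ is not projective'' into ``$N$ is not a summand in $\mathcal F \mathcal M$''. Everything else is routine identification of locally $\mathcal P_0^{\le\omega}$-free modules with flat Mittag-Leffler modules plus Bass's characterization of non-right perfect rings via non-stabilizing chains of principal left ideals.
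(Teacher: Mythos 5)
Your proof is correct and takes essentially the same route as the paper: reduce to Lemma~\ref{l:nonprec} with $\mathcal F = \mathcal P_0^{\leq\omega}$, produce a non-projective Bass module $N$ from a non-stabilizing chain of principal left ideals, and argue that $N$ cannot be a summand of a flat Mittag-Leffler module because a countably presented flat Mittag-Leffler module is projective. The paper reaches that last fact by citing the identification of $\mathcal F\mathcal M$ with locally $\mathcal P_0^{\leq\omega}$-free modules from \cite[Corollary 2.10(i)]{HT}, whereas you invoke the classical Raynaud--Gruson theorem directly — the same input, just a different reference.
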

\begin{proof} The if part is well known: $\mathcal F \mathcal M =\mathcal{P}_0$ whenever $R$ is a right perfect ring. 

Assume $R$ is not right perfect. By a classic result of Bass, there exists a countably presented flat module (= a Bass module over $\mathcal P _0 ^{< \omega}$) $N$ such that $N \notin \mathcal P _0$. By \cite[Corollary 2.10(i)]{HT}, the class $\mathcal F \mathcal M$ coincides with the class of all locally $\mathcal F$-free modules for $\mathcal F = \mathcal P_0^{\leq \omega}$. In particular, $N \notin \mathcal F \mathcal M$.
So Lemma~\ref{l:nonprec} applies, and shows that $N$ has no $\mathcal F \mathcal M$-precover.
\end{proof}
 
For further applications of Lemma \ref{l:nonprec}, we refer to \cite[\S4]{AST2}, \cite[\S3]{SlT2} and \cite[\S4]{S}.

\medskip
\section{Stationarity and the vanishing of Ext}
\label{sec:vanish} 
 
Next, we develop several technical tools involving Mittag-Leffler conditions and pure injectivity. It turns out that these tools are available in a rather general setting,  but one has to work with relative Mittag-Leffler conditions rather than the {\lq}absolute{\rq} ones. More precisely, we will use $\mathcal B$-stationary modules, where $\mathcal B$ will be a class closed under direct products and direct limits.

In the present Section, we review this concept with a special emphasis on its relationship to the vanishing of Ext. In Section 5 we will see that the classes $\mathcal B$ as above admit a pure-injective cogenerator $C$, and we will thus  focus on  $C$-stationarity.

Let us first recall the classic notions related to inverse systems of modules (see \cite{AH}, \cite{RG}):

\begin{defn}\label{d:MLc} Let $R$ be a ring and $\mathcal H = (H_i, h_{ij} \mid i \leq j \in I )$ be an inverse system of modules, with the inverse limit $(H, h_i \mid i \in I )$. 

Then $\mathcal H$ is a \emph{Mittag-Leffler} inverse system, provided that for each $k \in I$ there exists $k \leq j \in I$ such that $\mbox{Im}(h_{kj}) = \mbox{Im}(h_{ki})$ for each $j \leq i \in I$, that is, the terms of the decreasing chain $( \mbox{Im}(h_{ki}) \mid k \leq i \in I )$ of submodules of $H_k$ stabilize. If moreover the stabilized term $\mbox{Im}(h_{kj})$ equals $\mbox{Im}(h_k)$, then $\mathcal H$ is called \emph{strict Mittag-Leffler}. Notice that the two notions coincide when $I$ is countable.  

Let $B$ be a module and $\mathcal M = (M_i, f_{ji} \mid i \leq j \in I )$ be a direct system of finitely presented modules, with the direct limit $(M, f_i \mid i \in I )$. Applying the contravariant functor $\Hom _R(-,B)$, we obtain the induced inverse system $\mathcal H = (H_i, h_{ij} \mid i \leq j \in I )$, where $H_i = \Hom _R(M_i,B)$ and $h_{ij} = \Hom _R(f_{ji},B)$ for all $i \leq j \in I$. 

Let $\mathcal B$ be a class of modules. A module $M$ is \emph{$\mathcal B$-stationary} (\emph{strict $\mathcal B$-stationary}), provided that $M$ can be expressed as the direct limit of a direct system $\mathcal M$ of finitely presented modules so that for each $B \in \mathcal B$, the induced inverse system $\mathcal H$ is Mittag-Leffler (strict Mittag-Leffler).  

If $\mathcal B = \{ B \}$ for a module $B$, we will use the notation \emph{$B$-stationary} instead of $\{ B \}$-stationary, and similarly for the strict stationarity.
\end{defn} 

\begin{rem}\label{r:coinc}  Let $B$ be a module. Then the strict $B$-stationarity of $M$ can equivalently be expressed as follows: for each $l \in I$, there exists $l \leq i \in I$ such that $\Img(\Hom _R(f_{il},B)) \subseteq \Img(\Hom _R(f_l,B))$, see \cite[\S8]{AH}. Moreover, the notions of a $B$-stationary and strict $B$-stationary module coincide in the case when $M$ is countably presented. They also coincide for $B$ (locally) pure-injective by \cite[Proposition 1.7]{H}. 

In fact, if $M$ is $B$-stationary (strict $B$-stationary), then the induced inverse system $\mathcal H$ is Mittag-Leffler (strict Mittag-Leffler) for \emph{each} presentation of $M$ as the direct limit of a direct system $\mathcal M$ of finitely presented modules, see \cite{AH}.    

Also, $\mathcal F \mathcal M$ coincides with the class of all flat $R$-stationary modules, cf.\ \cite{RG}. 
\end{rem}      

First, we will deal with the strict $\mathcal B$-stationarity of the modules in $^\perp \mathcal B$. The following result is a mix of Lemmas 2.3 and 2.5 from \cite{SS}; for the reader's convenience, we provide a detailed proof here:

\begin{lem} \label{l:strictstat} Let $\mathcal B$ be a filter-closed class of modules. 
Then every module $M\in{}^\perp\mathcal B$ is strict $\mathcal B$-stationary.
  \end{lem}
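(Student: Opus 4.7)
The plan is a proof by contradiction. Fix $M \in {}^{\perp}\mathcal B$ together with a presentation $M = \varinjlim_{i \in I}(M_i, f_{ji})$ by finitely presented modules, with limit cone $(f_i \colon M_i \to M)$. By the reformulation in Remark \ref{r:coinc}, it suffices to produce, for every $k \in I$ and every $B \in \mathcal B$, an index $j \geq k$ with $\Img \Hom_R(f_{jk},B) \subseteq \Img \Hom_R(f_k,B)$. Assuming this fails for some pair $(k,B)$, pick for each $l$ in the directed cofinal set $J := \{j \in I \mid j \geq k\}$ a morphism $\psi_l \in \Hom_R(M_l,B)$ such that $\psi_l f_{lk}$ does not extend along $f_k$ to a map $M \to B$; the goal is to derive a contradiction from $\Ext^1_R(M,\mathcal B) = 0$.

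The core construction assembles these $\psi_l$ into a single obstruction inside a filter product. Set $\mathfrak F = \mathfrak F_J$, $P = B^J$ and $\Sigma = \Sigma_{\mathfrak F} P$, so that by filter-closedness of $\mathcal B$ both $P$ and $\Sigma$ lie in $\mathcal B$. For each $l \in J$ define $\Psi^{(l)} \colon M_l \to P$ coordinate-wise by $(\Psi^{(l)}(x))_{l'} = \psi_{l'}(f_{l'l}(x))$ when $l' \geq l$ and $0$ otherwise. A direct check shows that for $l_2 \leq l_1$ in $J$ the maps $\Psi^{(l_1)} f_{l_1 l_2}$ and $\Psi^{(l_2)}$ agree on every coordinate $l' \geq l_1$, so their difference lies in $\Sigma$. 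Reducing modulo $\Sigma$, the family $\bar\Psi^{(l)} \colon M_l \to P/\Sigma$ becomes compatible with every transition map, and cofinality of $J$ in $I$ packages it into a homomorphism $\tilde\Psi \colon M \to P/\Sigma$ satisfying $\tilde\Psi f_k = q \Psi$, where $\Psi := (\psi_l f_{lk})_{l \in J} \colon M_k \to P$ and $q \colon P \twoheadrightarrow P/\Sigma$ is the projection.

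Finally, I exploit $\Ext^1_R(M,\Sigma) = 0$: the sequence $0 \to \Sigma \to P \to P/\Sigma \to 0$ induces a surjection $\Hom_R(M,P) \twoheadrightarrow \Hom_R(M,P/\Sigma)$, so $\tilde\Psi$ lifts to some $\hat\Psi \colon M \to P$. Then $\hat\Psi f_k - \Psi$ is a homomorphism from the finitely generated module $M_k$ into $\Sigma$; evaluating on a finite generating set of $M_k$ and intersecting the finitely many resulting zero sets from $\mathfrak F$ yields $j_0 \in J$ such that $\pi_l \hat\Psi f_k = \psi_l f_{lk}$ for every $l \geq j_0$. Taking $l = j_0$, we conclude that $\pi_{j_0} \hat\Psi \colon M \to B$ extends $\psi_{j_0} f_{j_0 k}$ along $f_k$, contradicting the defining property of $\psi_{j_0}$. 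I anticipate the main difficulty to be the middle paragraph, namely engineering the cut-off in the definition of $\Psi^{(l)}$ so that the incompatibilities between consecutive levels collapse in $P/\Sigma$ and so that a single $\Sigma \in \mathcal B$ simultaneously absorbs all of them; everything else is formal, invoking filter-closedness only to place $P,\Sigma$ in $\mathcal B$ and the hypothesis $M \in {}^{\perp}\mathcal B$ only once to lift $\tilde\Psi$.
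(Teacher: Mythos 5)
Your proof is correct and follows essentially the same route as the paper's: assemble the offending maps into an $\mathfrak F$-reduced power of $B$, lift the induced map $M \to P/\Sigma_{\mathfrak F}P$ along the projection using $\Ext^1_R(M,\Sigma_{\mathfrak F}P)=0$, and obtain a contradiction via the finite generation of $M_k$. The only cosmetic difference is that the paper allows the target modules $B_i$ to vary with $i$ (so as to extract a single index $i$ witnessing stabilization simultaneously for all $B \in \mathcal B$), whereas you fix one $B$ throughout --- a legitimate simplification that still yields exactly strict $\mathcal B$-stationarity.
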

\begin{proof} We will prove the following: 
if $(M, f_i\mid i\in I)$ is the direct limit of a direct system $\mathcal M = (M_i, f_{ji}\mid i\leq j \in I)$ consisting of finitely generated modules, then for each $l \in I$ there exists $l \leq i \in I$ such that for each $B\in\mathcal B$ and $g\in\Hom_R(M_i,B)$, we have $gf_{il}\in\Img(\Hom_R(f_l,B))$.

Suppose that the claim is not true, so there exists $l \in I$ such that for each $l \leq i \in I$ there exist $B_i\in\mathcal B$ and $g_i:M_i\to B_i$, such that $g_if_{il}$ does not factor through $f_l$. For $i \in I$ such that $l \nleq i$, we let $B_i = 0$ and $g_i = 0$. Put $B = \prod_{i \in I} B_i$.

We define a homomorphism $h_{ji}: M_i \to B_j$ for each pair $i,j \in I$ in the following way: $h_{ji} = g_jf_{ji}$ if $i \leq j$ and $h_{ji} = 0$ otherwise. This family of maps gives rise to the canonical homomorphism $h: \bigoplus_{k \in I} M_k \to B$. More precisely, if we denote by $\pi_j: B \to B_j$ the canonical projection and by $\nu_i: M_i \to \bigoplus _{k\in I}M_k$ the canonical inclusion, $h$ is the (unique) map such that $\pi_j h \nu_i = h_{ji}$. Note that for every $i,j \in I$ \st $i \leq j$, the set $\{k \in I \mid h_{ki} = h_{kj}f_{ji} \}$ is in the associated filter $\mathfrak F_I$ since it contains each $k \geq j$. 
Hence, if we denote by $\varphi$ the canonical pure epimorphism $\bigoplus _{i\in I}M_i\to M = \varinjlim _{i\in I}M_i$ (\st $\varphi\nu_i = f_i$ for all $i\in I$), then $h(\Ker (\varphi)) \subseteq \Sigma _{\mathfrak F_I}B$. So there is a well-defined homomorphism $u$ from $M$ to the $\mathfrak F_I$-reduced product $B/\Sigma _{\mathfrak F_I}B$ making the following diagram commutative ($\rho$ denotes the canonical projection):
$$\begin{CD}
	B @>{\rho}>>	B/\Sigma _{\mathfrak F_I}B @>>> 0\ 	\\
	@A{h}AA			@A{u}AA	\\
	\bigoplus _{i\in I} M_i	@>{\varphi}>>	M @>>> 0.
\end{CD}$$
Since $\mathcal B$ is filter-closed, $\Sigma _{\mathfrak F_I}B\in\mathcal B$, whence $\Ext^1_R(M,\Sigma_{\mathfrak F_I} B) = 0$. It follows that there exists $g \in \Hom_R(M,B)$ \st $u = \rho g$. 

For each $i \in I$, we have $\rho g f_i = \rho g \varphi \nu_i = \rho h \nu_i$; so $g f_i - h \nu_i$ maps $M_i$ into $\Sigma _{\mathfrak F_I}B$. Since $M_i$ is finitely generated, there exists $i \leq j \in I$ such that $\pi_k g f_i = \pi_k h \nu_i$ for all $j \leq k \in I$. However, $\pi_k h \nu_i = h_{ki} = g_k f_{ki}$. In particular, for $i = l$ and $k = j$, we infer that $\pi_j g f_l = g_j f_{jl}$. Thus, $g_jf_{jl}$ factors through $f_l$, a contradiction.           
\end{proof}

We will also need the following variant of \cite[Proposition 2.7]{SS}.

\begin{prop} \label{p:pureinprod} Let $\mathcal G$ be a class of modules and $M$ a countably presented module such that $M\in {}^\perp \mathcal G$. Then the following is equivalent:
\begin{enumerate}
\item $M\in {}^\perp D$ for each module $D$ isomorphic to a pure submodule of a product of modules from $\mathcal G$;
\item $M\in {}^\perp D$ for each module $D$ isomorphic to a countable direct sum of modules from $\mathcal G$;
\item $M$ is $\mathcal G$-stationary.
\end{enumerate}
\end{prop}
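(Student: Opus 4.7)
The plan is to prove the equivalences cyclically, $(1)\Rightarrow(2)\Rightarrow(3)\Rightarrow(1)$. The first implication $(1)\Rightarrow(2)$ is immediate because the canonical embedding $\bigoplus_{n<\omega}G_n\hookrightarrow\prod_{n<\omega}G_n$ is pure, so any countable direct sum of modules from $\mathcal{G}$ is isomorphic to a pure submodule of a product of modules from $\mathcal{G}$.

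For $(2)\Rightarrow(3)$ I would argue the contrapositive by directly reusing the construction in the proof of Lemma~\ref{l:strictstat}. Fix a presentation $M=\varinjlim_{n<\omega}M_n$ with $M_n$ finitely generated (available since $M$ is countably presented); by Remark~\ref{r:coinc}, for countable direct systems the strict and ordinary $\mathcal{G}$-stationarity coincide and are independent of the chosen presentation. If $M$ fails to be $\mathcal{G}$-stationary, some $G\in\mathcal{G}$ witnesses failure of (strict) Mittag-Leffler for the inverse system $(\Hom_R(M_n,G))$. I would then run the Lemma~\ref{l:strictstat} construction with index set $I=\omega$ and $\mathfrak F_I$ taken to be the Fr\'echet filter on $\omega$: here $\Sigma_{\mathfrak F_\omega}B=\bigoplus_{n<\omega}B_n$, so choosing each $B_n\in\mathcal{G}$ makes $\Sigma_{\mathfrak F_\omega}B$ a countable direct sum of modules from $\mathcal{G}$. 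Hypothesis~(2) then supplies the Ext vanishing $\Ext^1_R(M,\Sigma_{\mathfrak F_\omega}B)=0$ needed by that construction to produce the desired contradiction.

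For $(3)\Rightarrow(1)$, the main step, let $D$ be a pure submodule of $P=\prod_i G_i$ with $G_i\in\mathcal{G}$, and set $Q=P/D$. Since $\Ext^1_R(M,-)$ commutes with direct products and $M\in{}^\perp\mathcal{G}$, one has $\Ext^1_R(M,P)=\prod_i\Ext^1_R(M,G_i)=0$, so via the long exact sequence coming from $0\to D\to P\to Q\to 0$, showing $\Ext^1_R(M,D)=0$ reduces to the surjectivity of $\Hom_R(M,P)\to\Hom_R(M,Q)$. Given $g\in\Hom_R(M,Q)$ and $M=\varinjlim_{n<\omega}M_n$ with $M_n$ finitely presented, purity together with finite presentation yields lifts $\tilde h_n\in\Hom_R(M_n,P)$ of $gf_n$; the obstruction to assembling these into a compatible family (hence into a global lift $M\to P$) lives in $\varprojlim^1\Hom_R(M_n,D)$. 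The hard part of the proof is showing that this obstruction vanishes. The leverage I would use is that (3) together with $M\in{}^\perp\mathcal{G}$ gives strict $G_i$-stationarity of $M$ for each $i$, so by the Milnor $\varprojlim^1$ sequence $\varprojlim^1\Hom_R(M_n,G_i)=0$, and by the standard commutation of $\varprojlim^1$ with products $\varprojlim^1\Hom_R(M_n,P)=\prod_i\varprojlim^1\Hom_R(M_n,G_i)=0$. The main technical burden is then propagating this vanishing across the pure embedding $D\hookrightarrow P$: I expect to perform a diagonal, coordinatewise adjustment of the $\tilde h_n$ inside their $\Hom_R(M_n,D)$-torsors, exploiting strict Mittag-Leffler at each coordinate $G_i$ together with the countability of the direct system, to secure compatibility of the lifts even though $M$ need not be $D$-stationary a priori.
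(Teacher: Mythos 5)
Your $(1)\Rightarrow(2)$ is correct and matches the paper. Your $(2)\Rightarrow(3)$ plan (contrapositive, adapting the construction in Lemma~\ref{l:strictstat} with $I=\omega$ and the Fr\'echet filter, so that $\Sigma_{\mathfrak F_\omega}B$ is a countable direct sum of copies of the witnessing $G\in\mathcal G$) is a genuine reproving route and, with minor bookkeeping, would work; the paper instead derives $(2)\Rightarrow(3)$ by citing the proof of \cite[Proposition 2.7]{SS} together with \cite[Corollary 3.9]{AH}, avoiding any new construction.

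The gap is in $(3)\Rightarrow(1)$. You correctly reduce to the surjectivity of $\Hom_R(M,P)\to\Hom_R(M,Q)$ and correctly compute $\varprojlim^1\Hom_R(M_n,P)=\prod_i\varprojlim^1\Hom_R(M_n,G_i)=0$; but that tells you nothing directly about $\varprojlim^1\Hom_R(M_n,D)$, which is the obstruction group you actually need to kill. Your proposed ``diagonal, coordinatewise adjustment'' would pick, for each coordinate $i$, a sequence $c_n^i\in\Hom_R(M_n,G_i)$ solving the coboundary equation (this exists since the system over $G_i$ is Mittag-Leffler); but there is no reason the resulting tuple $(c_n^i)_i\colon M_n\to P$ lands in the arbitrary pure submodule $D$, so the adjusted lifts need not take values in $D$. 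The whole difficulty is exactly the assertion that $\mathcal G$-stationarity of $M$ implies $D$-stationarity of $M$ for $D$ a pure submodule of a product of $\mathcal G$-modules --- this is the content of \cite[Corollary 3.9]{AH}, which the paper cites precisely to bridge this step. Without that result (or an actual proof of it), your $(3)\Rightarrow(1)$ is incomplete; the ``coordinatewise'' heuristic cannot see $D$ at all.
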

\begin{proof} Since direct sums are pure in the corresponding direct products, the implication $(1) \Rightarrow (2)$ is trivial.

The implication $(2) \Rightarrow (1)$ is exactly \cite[Proposition 2.7]{SS} (for $\mathcal G$ replaced by its closure under countable direct sums). Its proof in \cite{SS} proceeds by showing that $(2) \Rightarrow (4) \Rightarrow (1)$, where $(4)$ says that $M$ is $D$-stationary for any module $D$ isomorphic to a pure submodule of a product of modules from $\mathcal G$. However, $(4)$ is equivalent to $(3)$ by \cite[Corollary 3.9]{AH}, whence $(2) \Rightarrow (3) \Rightarrow (1)$. 
\end{proof}

The next lemma on strict $B$-stationarity is inspired by \cite[Lemma 3.15]{H}. 

\begin{lem} \label{l:kerepi} Let $B$ be a module and $0\to N 
\to A\to M\to 0$ a short exact sequence of modules such that  $M\in{}^\perp (B^{(I)})^{cc}$ for each set $I$. Then $N$ is strict $B$-stationary if so is $A$. 
\end{lem}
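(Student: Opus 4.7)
\textit{Plan.} I will use Remark~\ref{r:coinc}, which tells us that strict $B$-stationarity does not depend on the chosen presentation as a direct limit of finitely presented modules. Present $N$ as $\varinjlim_{i\in I} N_i$ with $N_i$ finitely presented, with structural maps $g_{ji}\colon N_i\to N_j$ and canonical maps $g_i\colon N_i\to N$. It then suffices to show: for each $l\in I$ there exists $l\leq i\in I$ such that every $\phi\colon N_i\to B$ satisfies $\phi g_{il}\in\Img\Hom_R(g_l,B)$.

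Fix $l\in I$ and present $A=\varinjlim_{k\in K} A_k$ with $A_k$ finitely presented, structural maps $h_{rk}\colon A_k\to A_r$ and $h_k\colon A_k\to A$. Since $N_l$ is finitely presented, $jg_l$ factors through some $A_{k_0}$ via $\alpha\colon N_l\to A_{k_0}$ with $h_{k_0}\alpha=jg_l$. Strict $B$-stationarity of $A$ then supplies $k_0\leq k_1\in K$ such that each $\psi\colon A_{k_1}\to B$ has $\psi h_{k_1 k_0}=\chi h_{k_0}$ for some $\chi\colon A\to B$; precomposing with $\alpha$ gives $\psi h_{k_1 k_0}\alpha=\chi jg_l=(\chi j)g_l$, so every such $\psi$ produces a map $N_l\to B$ factoring through $g_l$.

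The heart of the proof is the bridging step: given $i\geq l$ and $\phi\colon N_i\to B$, one must produce $\psi\colon A_{k_1}\to B$ with $\psi h_{k_1 k_0}\alpha=\phi g_{il}$, for then $\phi g_{il}=(\chi j)g_l$ factors through $g_l$. The tool is the hypothesis $M\in{}^\perp (B^{(J)})^{cc}$ for each set $J$: it ensures that every homomorphism $N\to (B^{(J)})^{cc}$ extends along $j$ to a homomorphism $A\to (B^{(J)})^{cc}$. Concretely, I would argue by contradiction: if no $i\geq l$ works, collect witnesses $\phi_i\colon N_i\to B$ (for each $i\geq l$) into an aggregate map $\Psi\colon N_l\to\prod_{i\geq l} B$ which, by the failure hypothesis, does not factor through $g_l$. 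Composing with a suitable pure embedding into a pure-injective of the form $(B^{(J)})^{cc}$ (to be chosen so the hypothesis applies), one obtains a map $N\to (B^{(J)})^{cc}$ whose restriction to $N_l$ coincides with (the image of) $\Psi$; this map extends across the sequence to $A\to (B^{(J)})^{cc}$, and the extension, combined with strict $B$-stationarity of $A$ at $k_0\leq k_1$ and a componentwise pure-injectivity argument, should yield a factorization of every $\phi_i g_{il}$ through $g_l$, contradicting the choice of witnesses.

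The hard part is that bridging step, in particular (i) packaging the potential witnesses $\phi_i$ into a single map that genuinely lands in (or near) a module of the form $(B^{(J)})^{cc}$ rather than only in a direct product of copies of $B$; (ii) reconciling the asymmetry between the direct sum in the hypothesis and the natural direct product structure of the aggregate; and (iii) transferring the extension obtained in $(B^{(J)})^{cc}$ back to usable information about maps into $B$ so as to invoke the already-secured property for $A_{k_1}$. The key technical devices I expect to invoke are the pure-injectivity of $(B^{(J)})^{cc}$ and the fact, noted in Remark~\ref{r:coinc}, that stationarity and strict stationarity coincide for pure-injective targets.
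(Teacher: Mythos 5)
Your proposal is not a complete proof: you explicitly defer the central step (your ``bridging step'' with unresolved issues (i)--(iii)), and those are precisely the places where the argument would either succeed or fail. As written, the claim that an aggregate map $\Psi\colon N_l\to\prod_{i\ge l}B$ ``does not factor through $g_l$'' and that an extension of some map $N\to (B^{(J)})^{cc}$ across the sequence then yields factorizations of every $\phi_i g_{il}$ is only a hope, not an argument: you have not said how the target product (or sum) of copies of $B$ is to be embedded into a single module of the form $(B^{(J)})^{cc}$ in a way compatible with the component projections, nor how information about maps into $(B^{(J)})^{cc}$ is to be decoded back into factorizations of maps into $B$. These are genuine obstacles, not bookkeeping.

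The paper avoids all of this by invoking the tensor characterization of strict $B$-stationarity from \cite[Theorem 8.11]{AH}: $N$ is strict $B$-stationary if and only if, for every set $I$, the canonical map $\nu\colon N\otimes_R\Hom_{\Z}(B,(\Q/\Z)^I)\to\Hom_{\Z}(\Hom_R(N,B),(\Q/\Z)^I)$ is injective. Since $\Hom_{\Z}(B,(\Q/\Z)^I)\cong (B^{(I)})^c$, the hypothesis $M\in{}^\perp(B^{(I)})^{cc}$ gives $\Tor_1^R(M,(B^{(I)})^c)=0$ via Ext--Tor character duality; hence $N\otimes_R (B^{(I)})^c\to A\otimes_R(B^{(I)})^c$ is injective, and a two-square commutative diagram with the $\nu$-maps for $N$ and $A$ immediately transfers injectivity from the $A$-side (where it holds by assumption) to the $N$-side. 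Your instinct about where the hypothesis enters (extending maps from $N$ into double characters across $j$) is morally the same Tor-vanishing, but you are working at the level of elements and witnesses rather than at the level of the functorial map $\nu$, which is what makes the transfer automatic. If you want to complete your route, you would essentially be re-deriving \cite[Theorem 8.11]{AH} inside the proof; it is far cleaner to cite it and use the Tor argument directly.
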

\begin{proof} According to \cite[Theorem 8.11]{AH} (see also \cite{Z}), a necessary and sufficient condition for the strict $B$-stationarity of $N$ is that for each set $I$, the canonical map $$\nu :N\otimes_R\Hom_{\Z}(B,(\Q/\Z)^I) \to \Hom_{\Z}(\Hom_R(N,B),(\Q/\Z)^I)$$ defined by $\nu (n \otimes f)(g) = f(g(n))$ is injective. 

Since $\Hom_{\Z}(B,(\Q/\Z)^I)\cong (B^{(I)})^c$, we have $\Tor_1^R(M,\Hom_{\Z}(B,(\Q/\Z)^I)) = 0$ by our assumption on $M$. From the commutative diagram
$$\begin{CD}
	0	@>>>	N\otimes_R\Hom_{\Z}(B,(\Q/\Z)^I) @>>> A\otimes_R\Hom_{\Z}(B,(\Q/\Z)^I)  \\ 
	@.		@V{\nu}VV	 @V{\alpha}VV \\
   @.	\Hom_{\Z}(\Hom_R(N,B),(\Q/\Z)^I) @>>> \Hom_{\Z}(\Hom_R(A,B),(\Q/\Z)^I),
\end{CD}$$
\noindent we infer that $\nu$ is injective, because $\alpha$ is injective by our assumption on $A$.
\end{proof}

\bigskip
\section{Stationarity and pure-injectivity}
\label{sec:pinj} 

The classes of pure-injective modules especially relevant here are the $\Sigma$-pure injectives, and the elementary cogenerators. We will not deal with the model theoretic background of these notions; we just note that, by a classic theorem of Frayne, if two modules $M$ and $N$ are elementarily equivalent, then $M$ is a pure submodule of an ultrapower of $N$. The pure-injective hull $PE(M)$ of a module $M$ is elementarily equivalent to $M$ by a theorem of Eklof and Sabbagh, and so is its double dual $M^{cc}$, cf.\ \cite{P}.

\smallskip
The relation between stationarity and $\sum$-pure-injectivity goes back to work by Zimmermann \cite{Z}.

\begin{lem} \label{l:zimmer} For a module $C$, the following conditions are equivalent:
\begin{enumerate}
\item $C$ is $\Sigma$-pure-injective;
\item all modules are strict $C$-stationary;
\item all modules are $C$-stationary.
\item all countably presented modules are $C$-stationary.
\end{enumerate}
\end{lem}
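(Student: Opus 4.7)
The plan is to establish the cyclic chain $(1) \Rightarrow (2) \Rightarrow (3) \Rightarrow (4) \Rightarrow (1)$. The implications $(2) \Rightarrow (3) \Rightarrow (4)$ are immediate: strict $C$-stationarity obviously implies $C$-stationarity, and the condition on countably presented modules is a special case. The two substantive directions are $(1) \Rightarrow (2)$ and $(4) \Rightarrow (1)$, and my approach to both leans on Zimmermann's classical characterization of $\Sigma$-pure-injectivity via the descending chain condition on pp-definable subgroups of $C^n$ (cf.\ \cite{Z}, \cite{P}).

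For $(1) \Rightarrow (2)$, let $M = \varinjlim_{i \in I} M_i$ be any presentation of a module $M$ as the direct limit of finitely presented modules. Fix $k \in I$. Choosing finitely many generators of $M_k$ realizes $\Hom_R(M_k,C)$ as a pp-definable subgroup of some $C^{n_k}$, and for each $k \leq j$, the subgroup $\Img(\Hom_R(f_{jk},C)) \subseteq \Hom_R(M_k,C)$ is again pp-definable (using finite generators and relations of $M_j$). The assumed DCC forces this descending chain in $j$ to stabilize, which is precisely the Mittag-Leffler property of the induced inverse system. Since $C$ is in particular pure-injective, Mittag-Leffler coincides with strict Mittag-Leffler by Remark~\ref{r:coinc}, so $M$ is strict $C$-stationary.

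The main obstacle is $(4) \Rightarrow (1)$, which I would prove by contraposition using free realizations of pp-formulas. Assume $C$ is not $\Sigma$-pure-injective; then there is a strictly descending chain $\phi_0(C) \supsetneq \phi_1(C) \supsetneq \cdots$ of pp-definable subgroups of $C$ in one variable. Each pp-formula $\phi_n$ admits a free realization $(A_n,a_n)$ with $A_n$ finitely presented and $a_n \in A_n$, characterized by $\phi_n(N) = \{f(a_n) \mid f \in \Hom_R(A_n,N)\}$ for every module $N$. Since the identity witnesses $A_{n+1} \models \phi_{n+1}(a_{n+1})$ and $\phi_{n+1}$ implies $\phi_n$, the free realization of $\phi_n$ yields a homomorphism $g_n \colon A_n \to A_{n+1}$ with $g_n(a_n) = a_{n+1}$. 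Let $M = \varinjlim_n A_n$ along the $g_n$; as a countable direct limit of finitely presented modules, $M$ is countably presented.

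To conclude the contradiction with (4), I would apply $\Hom_R(-,C)$ to this presentation and inspect the inverse system at index $0$. By construction, the composite $f_{m0} \colon A_0 \to A_m$ sends $a_0$ to $a_m$; hence evaluation at $a_0$ carries $\Img(\Hom_R(f_{m0},C)) \subseteq \Hom_R(A_0,C)$ onto $\phi_m(C)$. The strict decrease of the chain $\phi_m(C)$ then forces strict decrease of $\Img(\Hom_R(f_{m0},C))$, so the inverse system fails Mittag-Leffler at index $0$. By Remark~\ref{r:coinc}, $M$ is not $C$-stationary, contradicting (4). The delicate ingredient here is the correspondence between pp-formulas and finitely presented modules via free realizations; once that dictionary is in place, the construction of the counterexample and the evaluation argument are essentially formal.
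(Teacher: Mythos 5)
Your proposal is correct, but it takes a genuinely different route from the paper's proof. The paper disposes of $(1) \Leftrightarrow (2)$ by citing Zimmermann \cite[Theorem 3.8]{Z}, handles $(4) \Rightarrow (3)$ by citing \cite[Proposition 3.10]{AH}, and then closes the cycle with $(3) \Rightarrow (1)$ by a model-theoretic detour: it passes from $C$ to its pure-injective hull $B = PE(C)$ (which is a pure submodule of an ultrapower, hence of a reduced power, of $C$), invokes \cite[Corollary 3.9]{AH} and \cite[Lemma 3.3.1]{P2} to conclude all modules are $B$-stationary, uses Remark~\ref{r:coinc} to upgrade this to strict $B$-stationarity, applies $(2) \Rightarrow (1)$ to get that $B$ is $\Sigma$-pure-injective, and finally descends to $C$ as a pure submodule. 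Your approach instead proves $(1) \Rightarrow (2)$ from scratch via Zimmermann's DCC characterization on pp-definable subgroups (noting that the family of images in the inverse system is downward directed, so DCC forces it to have a minimum), and proves $(4) \Rightarrow (1)$ directly by contraposition, building an explicit countably presented counterexample from free realizations of a strictly descending chain of pp-formulas. This is a more self-contained and elementary argument: it bypasses the machinery of elementary cogenerators, pure-injective hulls, and the cited propositions from \cite{AH}, producing a concrete Bass-type witness instead. The one small thing you should make explicit is that you may assume $\phi_{n+1}$ \emph{implies} $\phi_n$ as a pp-formula (not merely $\phi_{n+1}(C) \subseteq \phi_n(C)$); this is needed to get the connecting maps $g_n$ from the free realization property, and is arranged by replacing $\phi_n$ with the pp-formula $\bigwedge_{i \leq n}\phi_i$, which does not change the groups $\phi_n(C)$. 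With that adjustment the argument is complete.
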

\begin{proof} The equivalence of the first two conditions comes from \cite[Theorem 3.8]{Z}, and $(2)\Rightarrow (3)\Rightarrow (4)$ are trivial. For the implication $(4)\Rightarrow (3)$, see \cite[Proposition 3.10]{AH}. It remains to prove that $(3)$ implies $(1)$.

Applying \cite[Corollary 3.9]{AH}, we get that all modules are $B$-stationary, where $B$ is any pure submodule of a pure-epimorphic image of a direct product of copies of $C$. In particular, by \cite[Lemma 3.3.1]{P2}, this holds for any pure-injective module $B$ which is elementarily equivalent to $C$ (e.g., for $B = PE(C)$). Since $B$ is pure-injective, it follows from Remark \ref{r:coinc} that all modules are even strict $B$-stationary, and $B$ is $\Sigma$-pure-injective by the above. Thus $C$ is $\Sigma$-pure-injective, because $C$ is a pure submodule of $B$.
\end{proof}

We now turn to elementary cogenerators.
        
\begin{defn}\label{d:elem} A pure-injective module $E$ is called an \emph{elementary cogenerator}, if every pure-injective direct summand of a module elementarily equivalent to $E^{\aleph_0}$ is a direct summand of a direct product of copies of $E$. 
\end{defn}

Notice that by \cite[Corollary 9.36]{P}, for each module $M$ there exists an elementary cogenerator which is elementarily equivalent to $M$.
This allows to prove the following result which will play an important role in the sequel.

\begin{lem}\label{l:belem} Let $\mathcal B$ be a class of modules closed under direct products and direct limits. Then $\mathcal B$ contains a pure-injective module $C$, such that each module $B \in\mathcal B$ is a pure submodule of a direct product of copies of $C$. Moreover, if $\mathcal B$ contains a cogenerator, then $C$ is a cogenerator for $\ModR$.
\end{lem}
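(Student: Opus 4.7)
My plan is to take $C$ as a direct product of pure-injective elementary cogenerators, one for each elementary-equivalence class of modules represented in $\mathcal{B}$.

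As a first step I would exploit a set-theoretic bound: because the first-order language of right $R$-modules has cardinality $|R|+\aleph_0$, the complete elementary theories of $R$-modules form a set, of cardinality at most $2^{|R|+\aleph_0}$. Let $\mathbf{T}$ denote the subset of those theories realised by some module in $\mathcal{B}$, and for each $T\in\mathbf{T}$ fix a representative $M_T\in\mathcal{B}$ of theory $T$.

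The key step is then to produce, for each $T\in\mathbf{T}$, a pure-injective elementary cogenerator $E_T\in\mathcal{B}$ with $E_T\equiv M_T$. Starting from $M_T$, I would iterate ultrapowers transfinitely, passing to direct limits at limit stages, along the lines of the construction of elementary cogenerators in \cite[Ch.~9]{P}; a sufficiently saturated stage then qualifies as $E_T$. The observation keeping the construction inside $\mathcal{B}$ is that every ultrapower $N^I/\mathfrak{U}$ is naturally the direct limit $\varinjlim_{U\in\mathfrak{U}} N^U$ of direct products of copies of $N$, while the transition maps at limit stages of the iteration are elementary, hence pure. By the hypothesised closure of $\mathcal{B}$ under direct products and direct limits, every stage of the construction stays in $\mathcal{B}$; in particular $E_T\in\mathcal{B}$.

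With the $E_T$'s in hand I would set $C=\prod_{T\in\mathbf{T}} E_T$; then $C\in\mathcal{B}$ by product closure and $C$ is pure-injective as a product of pure-injectives. For any $B\in\mathcal{B}$ of theory $T$, the Eklof--Sabbagh theorem yields $PE(B^{\aleph_0})\equiv B^{\aleph_0}\equiv E_T^{\aleph_0}$, and applying the elementary cogenerator property of $E_T$ to $PE(B^{\aleph_0})$ (viewed as a pure-injective direct summand of itself) produces a set $J$ such that $PE(B^{\aleph_0})$ is a direct summand of $E_T^J$. Chaining the pure inclusions $B\subseteq B^{\aleph_0}\subseteq PE(B^{\aleph_0})$ then makes $B$ pure in $E_T^J$, and hence in $C^J$ because $E_T$ is a direct factor of $C$. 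The moreover clause drops out: if $\mathcal{B}$ contains a cogenerator $G$ of $\ModR$, then $G$ embeds purely in some $C^I$, so every module $M$ embeds into $G^K\hookrightarrow C^{I\times K}$, a product of copies of $C$. The hard part will be the second step --- arranging the transfinite ultrapower construction carefully so that the resulting elementary cogenerator ends up in $\mathcal{B}$, using only closure under direct products and direct limits (rather than the stronger closure under pure submodules available in the standard argument for genuinely definable subcategories).
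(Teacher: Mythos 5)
Your overall plan coincides with the paper's: take a representative system of elementary cogenerators, one for each elementary-equivalence class realised in $\mathcal B$, and let $C$ be their product. Your treatment of the purity claim via $PE(B^{\aleph_0})$ is a harmless variant of the paper's argument via $B^{cc}$, and the ``moreover'' clause is handled the same way.

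The problem lies in the step you yourself label ``the hard part'': running a transfinite ultrapower construction inside $\mathcal B$ and arguing that a sufficiently saturated stage is an elementary cogenerator. You leave this unverified, and it involves several non-trivial points your sketch does not address (that the limit stage is actually pure-injective, and that saturation alone yields the elementary cogenerator property). More importantly, the whole construction is unnecessary. The decisive observation, which you circle around without isolating, is this: since $\mathcal B$ is closed under direct products and direct limits it is closed under ultrapowers (as you note), and since $\varinjlim\mathcal B=\mathcal B$ forces closure under direct summands, $\mathcal B$ already contains \emph{every} pure-injective module elementarily equivalent to one of its members. Indeed, if $B\in\mathcal B$ and $A$ is pure-injective with $A\equiv B$, then by Frayne's theorem $A$ is a pure --- hence, by pure-injectivity, split --- submodule of an ultrapower of $B$, and that ultrapower lies in $\mathcal B$; so $A\in\mathcal B$. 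With this in hand there is nothing to construct: simply invoke the existence of \emph{some} elementary cogenerator $E_T$ with $E_T\equiv M_T$ (\cite[Corollary 9.36]{P}), and conclude $E_T\in\mathcal B$ for free. This is exactly what the paper does, and it eliminates the gap in your proposal rather than patching it.
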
       
\begin{proof} First, it is a well-known fact that $\mathcal B = \varinjlim\mathcal B$ yields that $\mathcal B$ is closed under direct summands.

Now the closure properties of $\mathcal B$ imply that if $B\in\mathcal B$ is elementarily equivalent to a pure-injective module $A$, then also $A\in\mathcal B$. In particular, $\mathcal B$ is closed under taking pure-injective hulls, and double duals.

Thus we can choose among the modules from $\mathcal B$ a representative set for elementary equivalence, $\mathcal S$, consisting of elementary cogenerators. Let $C = \prod\mathcal S$. Then $C\in\mathcal B$ is pure-injective, and it has the property that every module $B\in\mathcal B$ is isomorphic to a pure submodule in a direct product of copies of $C$. Indeed, we have $B^{cc}\in\mathcal B$; moreover $B^{cc}$ is a pure-injective direct summand of $(B^{cc})^{\aleph_0}$ which is a module elementarily equivalent to $E^{\aleph_0}$ for some $E\in\mathcal S$. Hence $B^{cc}$ is a direct summand in a direct product, $D$, of copies of $C$ (by Definition \ref{d:elem}), and $B$ is pure in $B^{cc}$, and hence in $D$. 

Moreover, if $B \in \mathcal B $ is a cogenerator for $\ModR$, then $C$ cogenerates $\ModR$ as well.
\end{proof}

Given a pure-injective cogenerator $C$ as above, one can use the following lemma to find in  any (strict) $C$-stationary module a rich supply of countably presented $C$-stationary submodules. 

\begin{lem} \label{l:Cext} Let $C$ be a pure-injective module which cogenerates $\ModR$, and $M$ be a strict $C$-stationary module. Then there exists an $\aleph_1$-dense system $\mathcal L$ of strict $C$-stationary submodules of $M$ \st  $$\Hom_R(M,C)\to\Hom_R(N,C)\hbox{ is surjective}\eqno{(\dagger)}$$ 
for every directed union $N$ of modules from $\mathcal L$. 
\end{lem}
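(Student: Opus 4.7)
The plan is to construct $\mathcal L$ by a countable closure procedure starting from a fixed strict Mittag-Leffler presentation of $M$.

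By Remark~\ref{r:coinc}, I fix a direct system $(M_i, f_{ji} \mid i \le j \in I)$ of finitely presented modules whose direct limit is $(M, f_i \mid i \in I)$, and for each $l \in I$ I select $k(l) \ge l$ in $I$ with $\Img \Hom_R(f_{k(l), l}, C) = \Img \Hom_R(f_l, C)$. Call a countable directed subset $J \subseteq I$ \emph{good} when: (a) $k(l) \in J$ for every $l \in J$, and (b) whenever $x \in M_i$, $y \in M_j$ with $i, j \in J$ satisfy $f_i(x) = f_j(y)$ in $M$, some $k \in J$ with $k \ge i, j$ satisfies $f_{ki}(x) = f_{kj}(y)$. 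A routine $\omega$-step closure allows one to enlarge any countable $J_0 \subseteq I$ to a good $J$, and the union of a countable ascending chain of good subsets is good.

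For each good $J$, condition (b) makes the canonical map $g_J : \varinjlim_{i \in J} M_i \to M$ injective, so its image $L_J$ is a countably presented submodule of $M$ naturally isomorphic to $\varinjlim_{i \in J} M_i$. The subsystem $(\Hom_R(M_i, C))_{i \in J}$ inherits the strict Mittag-Leffler property: for $l \in J$, condition (a) together with the factorization $f_l = (L_J \hookrightarrow M) \circ (f_J)_l$ yields the chain of inclusions $\Img \Hom_R(f_l, C) \subseteq \Img \Hom_R((f_J)_l, C) \subseteq \Img \Hom_R((f_J)_{k(l), l}, C) = \Img \Hom_R(f_{k(l), l}, C) = \Img \Hom_R(f_l, C)$, so all inclusions are equalities. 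Thus $L_J$ is strict $C$-stationary. Setting $\mathcal L := \{L_J : J \text{ good}\}$ yields an $\aleph_1$-dense system of strict $C$-stationary submodules of $M$ (countable density follows from (a)+(b)-closure of any countable subset, and closure under countable chains from the fact that a countable increasing union of good subsets is good).

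The main obstacle is establishing $(\dagger)$. Let $N = \bigcup_\alpha L_{J_\alpha}$ be a directed union from $\mathcal L$ and $\phi : N \to C$. The index set $J := \bigcup_\alpha J_\alpha$ still satisfies (a) and (b) (though possibly uncountable), so $N = L_J$ as submodules of $M$, and $\phi$ corresponds to a compatible family $(\phi_i : M_i \to C)_{i \in J}$ with each $\phi_i \in \Img \Hom_R(f_i, C)$ by the stabilized-image identity proved above. To construct a global lift $\tilde\phi \in \Hom_R(M, C)$ I plan to extend this compatible family along a well-ordering of $I \setminus J$: at each successor stage, the strict Mittag-Leffler witnesses furnish sufficiently many candidate lifts at the new index, and the pure-injectivity of $C$ is used to select one consistent with the partial compatible family already built (here $C$ being a cogenerator of $\ModR$ lets one detect obstructions as failures of extension to $C$); at limit stages, the accumulated compatible family extends automatically. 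The coherence at successor stages is the genuinely subtle point and is the reason the lemma is delicate: it requires careful exploitation of pure-injectivity of $C$ together with the fact that the stabilized image $\Img \Hom_R(f_l, C)$ sits inside $\Hom_R(M, C)$ in a way tightly governed by the strict Mittag-Leffler witnesses $k(\cdot)$.
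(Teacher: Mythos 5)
Your construction of the $\aleph_1$-dense system $\mathcal L$ is plausible as far as it goes: the ``good'' closure of a countable index set does yield a countably presented submodule $L_J\cong\varinjlim_{i\in J}M_i$ of $M$, and the chain of inclusions you display correctly shows that $L_J$ is strict $C$-stationary (with witness $k(l)\in J$). The problem is that this has nothing to do yet with $(\dagger)$, and the argument for $(\dagger)$ is the part you never actually give --- you say explicitly that ``the coherence at successor stages is the genuinely subtle point.'' That step is not a detail; it is the entire content of the lemma, and the proposed transfinite extension of the compatible family $(\phi_i)_{i\in J}$ along $I\setminus J$ does not go through as stated. Knowing that each $\phi_l$ lies in $\Img\Hom_R(f_l,C)$ gives, for each $l$ separately, some $\psi_l\colon M\to C$ with $\psi_l f_l=\phi_l$, but these $\psi_l$ need not cohere, and pure-injectivity of $C$ by itself does not let you reconcile them into a single compatible family over all of $I$.

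The deeper issue is that your construction of $\mathcal L$ does not build in the property that actually makes $(\dagger)$ true, and hence no subsequent argument can recover it. What is needed is control over the tensored inclusion: writing $Q=C^c$, property $(\dagger)$ for an inclusion $\iota\colon N\hookrightarrow M$ follows once $\iota\otimes_R Q$ is monic, because $\Hom_R(\iota,Q^c)$ can be identified with $(\iota\otimes_R Q)^c$, which is then surjective, and $\Hom_R(\iota,C)$ is a retract of it since $C$ is a direct summand of $C^{cc}=Q^c$ by pure-injectivity. Moreover, monicity of $\iota\otimes_R Q$ is visibly preserved under directed unions, which is exactly why $(\dagger)$ transfers from $\mathcal L$ to arbitrary directed unions of members of $\mathcal L$. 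Your ``good'' condition on $J$ only guarantees that identifications in $M$ are witnessed inside $J$, i.e.\ that $L_J\to M$ is monic; it says nothing about identifications in $M\otimes_R Q$, so there is no reason for $L_J\otimes_R Q\to M\otimes_R Q$ to be monic. The paper avoids this by working from the start in the $Q$-Mittag-Leffler setting of Angeleri H\"ugel--Herbera: strict $C$-stationarity of $M$ is rephrased as $M$ being $Q$-Mittag-Leffler, and \cite[Theorem 5.1(4)]{AH} is applied repeatedly to produce countably presented $Q$-Mittag-Leffler submodules whose inclusion maps stay monic after $-\otimes_R Q$. The $\aleph_1$-density and the closure of this family under countable directed unions, together with the preservation of the tensor-monicity under directed unions, then yield both the dense system and $(\dagger)$ simultaneously. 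In short: the selection of $\mathcal L$ and the proof of $(\dagger)$ must be intertwined via the tensor criterion, rather than attempted sequentially as in your proposal.
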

\begin{proof} Let $Q = C^c$. By \cite[Proposition 8.14(2)]{AH}, a module $A$ is strict $C$-stationary if and only if it is $Q$-Mittag-Leffler. Repeatedly using \cite[Theorem 5.1(4)]{AH} for $M$, we obtain a $\subseteq$-directed set $\mathcal F$ of countably presented $Q$-Mittag-Leffler submodules of $M$ satisfying $(2)$ from Definition~\ref{d:dense} for $\lambda = \aleph _1$: observe that the map $v$ in the statement of \cite[Theorem 5.1(4)]{AH} is monic since the injectivity of $v\otimes_R Q$ implies $(\dagger)$ (we use that $C$ is a direct summand in $Q^c$) and $C$ is a cogenerator.

We extend $\mathcal F$ gradually by adding countable directed unions, noticing along the way that each newly added countably presented module $N$ has the property that $N\subseteq M$ stays monic after applying $-\otimes_R Q$ (as $\Tor^R_1(-,Q)$ commutes with direct limits), and it is $Q$-Mittag-Leffler by \cite[Corollary 5.2]{AH}. In this way, we eventually arrive at the desired $\aleph_1$-dense system $\mathcal L$ of strict $C$-stationary submodules of $M$. Note that any directed union of modules from $\mathcal L$ satisfies $(\dagger)$ since $(\dagger)$ is implied by the injectivity of the corresponding tensor map, which, in turn, is a property preserved by taking directed unions.
\end{proof}

\begin{rem} \label{r:lambda} Lemma~\ref{l:Cext} holds with $\aleph_1$ replaced by any regular uncountable cardinal. However, we won't need it in this generality.
\end{rem}

A useful tool for proving that many classes of the form $^\perp \mathcal B$ are deconstructible is provided by 

\begin{lem} \label{l:up} \cite[Proposition 2.4]{SS} \, Let $\mathcal B$ be a filter-closed class of modules \st $\mathcal B$ cogenerates $\ModR$. Then for each uncountable regular cardinal $\lambda$ and each module $M\in{}^\perp \mathcal B$, there is a $\lambda$-dense system $\mathcal C_\lambda$ of submodules of  $M$ \st $M/N\in{}^\perp \mathcal B$ for all $N \in \mathcal C_\lambda$.
\end{lem}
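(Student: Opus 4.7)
Since $\mathcal B$ is filter-closed, Lemma~\ref{l:strictstat} gives that $M$ is strict $\mathcal B$-stationary. The plan is to exploit this stationarity via a transfinite bookkeeping argument modelled on the construction of Lemma~\ref{l:Cext}, but at the cardinality $\lambda$ and for the full class $\mathcal B$ rather than a single cogenerator.

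First, I would reformulate the target: the condition $M/N \in {}^\perp\mathcal B$ amounts, via the long exact $\Ext$-sequence and $\Ext^1_R(M,B) = 0$, to surjectivity of $\Hom_R(M,B) \to \Hom_R(N,B)$ for every $B \in \mathcal B$. Fix a presentation $M = \varinjlim_{i\in I}(M_i, f_{ji})$ with $M_i$ finitely presented, so that each induced inverse system $(\Hom_R(M_i,B), \Hom_R(f_{ji},B))$ is strict Mittag-Leffler. I would then take $\mathcal C_\lambda$ to consist of submodules of the form $N_J = \bigcup_{i\in J} f_i(M_i)$, where $J\subseteq I$ ranges over directed subsets of cardinality $<\lambda$ that are \emph{stabilization-closed}: for each $i\in J$ and each $B$ in a suitable representative family, an index $k\geq i$ witnessing the strict Mittag-Leffler property is also in $J$. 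For such a $J$ and a homomorphism $g: N_J \to B$, each restriction $g\circ f_i: M_i \to B$ lies in $\Img(\Hom_R(f_{ki},B)) = \Img(\Hom_R(f_i,B))$, i.e.\ coincides with the $f_i$-pullback of some global map $M \to B$; strictness (rather than plain Mittag-Leffler) then allows one to coherently glue these local lifts into a single extension $M \to B$ of $g$.

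The $\lambda$-density of $\mathcal C_\lambda$ is then verified by the usual closing-off argument: given any $X \subseteq M$ with $|X| < \lambda$, start from a directed $J_0 \subseteq I$ covering $X$ and iteratively enlarge $J_\alpha$ to $J_{\alpha+1}$ by adjoining stabilization indices. Regularity of $\lambda$ keeps $|J_\alpha| < \lambda$ throughout, and stabilization-closure is preserved under unions of chains of length $<\lambda$. The principal obstacle is the choice of a representative family of $\mathcal B$: since $\mathcal B$ is in general a proper class, one cannot close $J$ against all $B \in \mathcal B$ directly. My resolution is to use the filter-closed and cogenerating hypotheses to extract, by an analogue of Lemma~\ref{l:belem} (whose proof adapts, with care, to filter-closed cogenerating classes), a single pure-injective cogenerator $C \in \mathcal B$ such that every $B \in \mathcal B$ is a pure submodule of some power $C^K$. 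Stabilization-closure of $J$ only with respect to $C$ then suffices, because extensions into $C^K$ descend to extensions into $B$ via Lemma~\ref{l:kerepi} applied to the pure inclusion $B \hookrightarrow C^K$ (using that $M/N_J$ inherits enough $\mathcal B$-stationarity from $M$ to control $\Ext^1_R(M/N_J, -)$ along this pure embedding). Coordinating this single-cogenerator reduction with the $\lambda$-sized bookkeeping of $J$ is the delicate point of the construction.
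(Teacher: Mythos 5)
The paper does not prove Lemma~\ref{l:up}; it cites it as \cite[Proposition 2.4]{SS}, so there is no in-paper argument to compare against. Evaluating your sketch on its own merits, there are two substantial gaps and one smaller one.

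First, the detour through a single pure-injective cogenerator $C$ is both unjustified and unnecessary. Lemma~\ref{l:belem} crucially uses $\varinjlim\mathcal B=\mathcal B$ to get closure under direct summands, hence under pure-injective hulls and double duals $B^{cc}$, so that one can pass to a representative set of elementary cogenerators. Filter-closedness gives none of this: a filter-closed class is closed under $\mathfrak F$-products (in particular products and direct sums), but not under $\mathfrak F$-\emph{reduced} products, ultrapowers, or direct summands, so there is no reason $B^{cc}$ or $PE(B)$ should lie in $\mathcal B$. Your parenthetical ``adapts, with care'' is doing a lot of unacknowledged work, and I do not see how to make it go through. More to the point, the reduction is not needed: the \emph{proof} of Lemma~\ref{l:strictstat} (not just its statement) produces, for each $l\in I$, a \emph{single} index $i\geq l$ such that $gf_{il}\in\Img(\Hom_R(f_l,B))$ simultaneously for every $B\in\mathcal B$ and every $g\in\Hom_R(M_i,B)$. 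This uniformity --- which is precisely what the filter-closed hypothesis buys you, via the $\mathfrak F_I$-product $\Sigma_{\mathfrak F_I}B$ --- dissolves the proper-class obstruction you were trying to circumvent, and lets you close $J$ against a single $I\to I$ stabilization function with no reference to any individual $B$.

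Second, the gluing step --- ``strictness\dots allows one to coherently glue these local lifts into a single extension $M\to B$ of $g$'' --- is the crux of the whole argument and is only asserted. Having, for each $i\in J$, \emph{some} $h_i\in\Hom_R(M,B)$ with $h_if_i=gf_i$ gives no canonical way to produce one $h\in\Hom_R(M,B)$ restricting to $g$ on all of $N_J$: the $h_i$ need not be pairwise compatible, and $J$ need not be cofinal in $I$, so you cannot simply patch along a cofinal chain. What you actually need is surjectivity of the restriction $\varprojlim_{i\in I}\Hom_R(M_i,B)\to\varprojlim_{i\in J}\Hom_R(M_i,B)$ onto the subset of families factoring through $N_J$; deducing this from stabilization-closure of $J$ plus strict Mittag-Leffler is a genuine piece of work (of Raynaud--Gruson/Angeleri H\"ugel--Herbera type), not an observation. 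As written, the proposal conflates ``each $gf_i$ lies in the stabilized image $\Img(\Hom_R(f_i,B))$'' with ``$g$ extends to $M$.''

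Finally, a smaller point: Definition~\ref{d:dense} requires $\mathcal C_\lambda$ to consist of $<\lambda$-\emph{presented} submodules. Your $N_J=\bigcup_{i\in J}f_i(M_i)$ is $<\lambda$-generated when $|J|<\lambda$, but not automatically $<\lambda$-presented. You would additionally need $J$ closed under ``kernel-witnessing'' indices (for each $m\in M_i$, $i\in J$, with $f_i(m)=0$, some $k\in J$, $k\geq i$, with $f_{ki}(m)=0$) so that $\varinjlim_{i\in J}M_i\to M$ is injective and $N_J\cong\varinjlim_{i\in J}M_i$. This is a routine addition to the bookkeeping, but must be stated.
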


\section{Countable type}
\label{sec:ctype}

Let $\mathfrak C=(\mathcal A, \mathcal B)$ be a hereditary cotorsion pair   in $\ModR$. The Telescope Conjecture  for Module Categories asserts that  $\mathfrak C$  is of finite type whenever $\mathcal A$  and $\mathcal B$ are closed under direct limits. This statement is  known to be true  in some special cases  \cite{AST}. On the other hand,  the Countable Telescope Conjecture was proved  in full generality in \cite[Theorem 3.5]{SS}.  It states that  $\mathfrak C$   is of countable type whenever $\mathcal B$ is closed under  unions of well-ordered chains. 

As an application of the tools developed above, we prove a version of the Countable Telescope Conjecture for not necessarily hereditary cotorsion pairs. 

\begin{thm} \label{t:counttype} Let $\mathfrak C = (\mathcal A,\mathcal B)$ be a cotorsion pair with $\varinjlim \mathcal B = \mathcal B$. Then $\mathfrak C$ is of countable type, and $\mathcal B$ is definable.
\end{thm}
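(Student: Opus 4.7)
The plan is to exploit the closure properties of $\mathcal B$ to reduce to a single pure-injective cogenerator $C\in\mathcal B$, and then for each $M\in\mathcal A$ produce an $\aleph_1$-dense system of countably presented submodules from $\mathcal A$ whose quotients also lie in $\mathcal A$. An $\mathcal A^{\leq\omega}$-filtration of $M$ then follows by transfinite recursion, and Eklof's lemma yields $\mathcal B=(\mathcal A^{\leq\omega})^\perp$, i.e.\ countable type.

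First I would observe that $\mathcal B=\mathcal A^\perp$ is filter-closed, since it is always closed under direct products and, by hypothesis, under direct limits; as $\mathcal I_0\subseteq\mathcal B$, the class contains a cogenerator. Lemma~\ref{l:belem} then supplies a pure-injective cogenerator $C\in\mathcal B$ such that every $B\in\mathcal B$ is isomorphic to a pure submodule of some product $C^I$. Fix $M\in\mathcal A$. By Lemma~\ref{l:strictstat}, $M$ is strict $\mathcal B$-stationary, in particular strict $C$-stationary, so Lemma~\ref{l:Cext} yields an $\aleph_1$-dense system $\mathcal L_1$ of countably presented, strict $C$-stationary submodules of $M$ enjoying the surjectivity property $(\dagger)$. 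Lemma~\ref{l:up} (at $\lambda=\aleph_1$) yields an $\aleph_1$-dense system $\mathcal L_2$ with $M/N\in\mathcal A$ for every $N\in\mathcal L_2$. Their intersection $\mathcal L\subseteq\mathcal L_1\cap\mathcal L_2$ is still $\aleph_1$-dense and inherits both properties, and is closed under countable directed unions.

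The crucial step is to show that each $N\in\mathcal L$ lies in $\mathcal A$. Since $N$ is countably presented and $C$-stationary, Proposition~\ref{p:pureinprod} applied with $\mathcal G=\{C\}$ reduces this to $N\in{}^\perp C$; the proposition then upgrades orthogonality from $C$ to every pure submodule of a product of copies of $C$, hence to all of $\mathcal B$. The naive long exact sequence from $0\to N\to M\to M/N\to 0$ only gives $\Ext^1_R(N,C)\hookrightarrow\Ext^2_R(M/N,C)$, and in the non-hereditary setting this $\Ext^2$-term need not vanish. The vanishing $\Ext^1_R(N,C)=0$ must therefore be extracted from stationarity: presenting $N=\varinjlim F_i$ as a countable directed colimit of finitely presented modules with the induced inverse system $\{\Hom_R(F_i,C)\}$ strictly Mittag--Leffler, the Milnor short exact sequence gives $\varprojlim{}^1\Hom_R(F_i,C)=0$, and then the surjectivity $(\dagger)$ together with Lemma~\ref{l:kerepi} (to transfer strict $C$-stationarity through the short exact sequence) is used to force $\varprojlim\Ext^1_R(F_i,C)=0$ as well. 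This is the main obstacle of the whole proof -- the delicate step where the non-hereditary case departs from \cite[Theorem~3.5]{SS}.

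Once $N\in\mathcal A$ is established, a standard transfinite recursion along $\mathcal L$ produces an $\mathcal A^{\leq\omega}$-filtration $(M_\alpha)_{\alpha\leq\sigma}$ of $M$: at each stage one applies the same construction to the quotient $M/M_\alpha\in\mathcal A$, lifts a suitable $N$ back up to set $M_{\alpha+1}=M_\alpha+\tilde N$, and checks that both $M_{\alpha+1}/M_\alpha\in\mathcal A^{\leq\omega}$ and $M/M_{\alpha+1}\in\mathcal A$. Eklof's lemma then yields $\mathcal B=(\mathcal A^{\leq\omega})^\perp$, and a standard replacement (adjoining enough relations) ensures the generators can be chosen countably generated with countably presented first syzygies, giving countable type of $\mathfrak C$. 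The definability of $\mathcal B$ follows from its already-established closure under direct products and direct limits, together with closure under pure submodules, which in turn follows from $\mathcal B=\mathcal S^\perp$ for $\mathcal S$ consisting of modules with countably presented first syzygies via standard Ext--Tor duality.
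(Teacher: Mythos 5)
Your overall strategy follows the paper's, but there are two genuine gaps, and the first one is exactly the step you yourself flag as ``the main obstacle'' without actually closing it.

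\textbf{The key step $N\in{}^\perp C$ is not proved.} Your plan is to apply Lemma~\ref{l:Cext} to $M$, obtaining an $\aleph_1$-dense system of countably presented submodules $N\subseteq M$ with the surjectivity $(\dagger)$ at the level $\Hom_R(M,C)\twoheadrightarrow\Hom_R(N,C)$, and then to show $\Ext^1_R(N,C)=0$ via the Milnor sequence. But the Mittag--Leffler condition on $\{\Hom_R(F_i,C)\}$ only kills $\varprojlim^1$, leaving $\Ext^1_R(N,C)\cong\varprojlim\Ext^1_R(F_i,C)$, and there is no mechanism in your argument to make the latter vanish: the $F_i$ are arbitrary finitely presented modules, $(\dagger)$ is a surjectivity statement at the $\Hom(-,C)$ level for submodules of $M$ rather than for their syzygies, and Lemma~\ref{l:kerepi} is a transfer statement for stationarity, not a vanishing statement for $\Ext^1$. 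The paper sidesteps this precisely by applying Lemma~\ref{l:Cext} not to $M$ but to the first syzygy $K$ in a fixed free presentation $0\to K\to R^{(\kappa)}\overset{f}\to M\to 0$. It then cuts $\mathcal C_{\aleph_1}$ down to those $N=f(R^{(X_N)})$ whose kernel $K_N:=\Ker(f\!\restriction\! R^{(X_N)})$ lies in the $\aleph_1$-dense system on $K$. Then any $h:K_N\to C$ extends to $K$ by $(\dagger)$, extends further to $R^{(\kappa)}$ because $\Ext^1_R(M,C)=0$, and restricting back to $R^{(X_N)}$ gives the required extension, so $\Ext^1_R(N,C)=0$. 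This syzygy-level application of Lemma~\ref{l:Cext} is the missing idea.

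\textbf{The passage from the $\aleph_1$-dense system to an $\mathcal A^{\leq\omega}$-filtration is not ``a standard transfinite recursion.''} An $\aleph_1$-dense system with the required properties yields an $\mathcal A^{\leq\omega}$-filtration directly only for $\aleph_1$-presented $M$. For $\kappa$-presented $M$ with $\kappa$ large, the paper has to construct a $\lambda$-dense system $\mathcal C_\lambda\subseteq\mathcal A$ (with quotients in $\mathcal A$) for \emph{every} uncountable regular $\lambda\le\kappa$, and this is done by induction on $\lambda$ with four cases ($\lambda=\aleph_1$, $\lambda$ weakly inaccessible, $\lambda=\nu^+$ with $\nu$ regular, $\lambda=\nu^+$ with $\nu$ singular), the last of which requires a singular-compactness fusion argument. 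Then one still needs \cite[Lemma~3.2]{SS} to handle singular $\kappa$. Your proposal leaves all of this implicit, and the ``standard'' recursion you invoke does not exist without this scaffolding. Finally, the paper also verifies countability of the first syzygies of modules in $\mathcal A^{\le\omega}$ via strict $\mathcal B$-stationarity, ${}_RR^c$-stationarity and \cite[Corollary~5.3]{AH}; your remark about ``adjoining enough relations'' does not substitute for this.
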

\begin{proof} Let $C$ be the pure-injective module constructed for $\mathcal B$ in Lemma \ref{l:belem}.

Let $\mathcal A_0 = \mathcal A ^{\leq \omega}$. By induction on $\kappa$, we are going to prove that each $\kappa$-presented module $M\in\mathcal A$ is $\mathcal A_0$-filtered. There is nothing to prove for $\kappa \leq \aleph_0$, so let $\kappa$ be uncountable.

In the $\kappa$-presented module $M\in\mathcal A$, we will construct by induction, for each uncountable regular cardinal $\lambda\leq\kappa$, a $\lambda$-dense system  $\mathcal C_\lambda$ of submodules of $M$ such that $M/N \in \mathcal A$ for each $N \in \mathcal C_\lambda$, and   
\begin{equation}\label{e:2}
\mathcal C_\lambda \subseteq \mathcal A.
\end{equation}

Our strategy is to start with a $\mathcal C_\lambda$ given by Lemma~\ref{l:up} for $\mathcal G = \mathcal B$, and then select a suitable subfamily in $\mathcal A$. Note that it is enough to ensure that for every $N\in\mathcal C_\lambda$ there exists $L\in\mathcal C_\lambda$, such that $N\subseteq L \in \mathcal A$. Then the family $\mathcal C_\lambda\cap\mathcal A$ is the desired one, since each ascending chain in $\mathcal C_\lambda\cap\mathcal A$ is actually an $\mathcal A$-filtration and $\mathcal A$ is closed under filtrations. Indeed, for each $B \in \mathcal B$, if $N \in \mathcal C_\lambda$ and $L \in \mathcal A$ are such that $N \subseteq L$, then all homomorphisms from $N$ to $B$ extend to $M$, and hence to $L$; thus $L/N \in {}^\perp \{ B \}$.    

\medskip
We will distinguish the following four cases:

\smallskip

\noindent Case 1. \emph{$\lambda = \aleph _1$}: \, Fix a free presentation of $M$ 
$$\begin{CD} 0 @>>> K @>>> R^{(\kappa)} @>f>> M @>>> 0. \end{CD}$$
Let $\mathcal C_{\aleph_1}$ be an $\aleph _1$-dense system in $M$ provided by Lemma~\ref{l:up}. After taking the intersection with the set of all images $f(R^{(X)})$ where  $X$ is a countable subset of $\kappa$, we can assume that $\mathcal C_{\aleph_1}$ is compatible with this presentation, that is, each $N\in\mathcal C_{\aleph_1}$ has the form $f(R^{(X_N)})$ for a countable subset $X_N$ of $\kappa$. 

Let $\mathcal K = \{\,\Ker(f\restriction R^{(X_N)})\mid N\in\mathcal C_{\aleph_1}\}$. Since $\mathcal B$ is closed under coproducts and double duals, it follows from   Lemma~\ref{l:kerepi} that $K$ is strict $C$-stationary. Thus we can use Lemma~\ref{l:Cext} to obtain another $\aleph_1$-dense system, $\mathcal L$, this time consisting of submodules of $K$. Clearly, the system $\mathcal K\cap\mathcal L$ is $\aleph _1$-dense as well. Our new $\mathcal C_{\aleph_1}$ is defined as $\{N\in\mathcal C_{\aleph_1}\mid\Ker(f\restriction R^{(X_N)})\in\mathcal L\}$.

Notice that $\mathcal C_{\aleph_1}\subseteq {}^\perp C$. Indeed, given a module $N\in\mathcal C_{\aleph_1}$, each $h:\Ker(f\restriction R^{(X_N)})\to C$ can be extended to some $h^\prime: R^{(X_N)}\to C$ by the property $(\dagger)$ from Lemma~\ref{l:Cext} and by the fact that $M\in{}^\perp C$.

Let $N\in\mathcal C_{\aleph_1}$. Since $M/N \in \mathcal A$, $N$ is the kernel of the epimorphism $M\to M/N$ between two modules from~$\mathcal A$. By Lemma~\ref{l:strictstat}, $M$ is strict $C$-stationary, so using Lemma~\ref{l:kerepi} again, we see that each $N \in \mathcal C_{\aleph_1}$ is a countably presented $C$-stationary module from ${}^\perp C$.
Using Proposition~\ref{p:pureinprod} (for $\mathcal G = \{C\}$) together with the properties of $C$ guaranteed by Lemma~\ref{l:belem}, we conclude that $\mathcal C_{\aleph_1}\subseteq\mathcal A$.
\smallskip

\noindent Case 2. \emph{$\lambda$ weakly inaccessible}: \, As in the previous step, we start with a family $\mathcal C_\lambda$ provided by Lemma~\ref{l:up}. Since each $N_0\in\mathcal C_\lambda$ is $<\mu$-presented for some regular uncountable cardinal $\mu<\lambda$, we simply choose $N^\prime_0\in\mathcal C_\mu \subseteq \mathcal A$ containing $N_0$ as a submodule. Then there is $N_1\in\mathcal C_\lambda$ containing $N^\prime _0$, etc. The union, $N$, of the chain $N_0\subseteq N^\prime_0\subseteq N_1\subseteq N^\prime_1\subseteq\cdots$ satisfies $N \in \mathcal C_\lambda$. Moreover, $N \in \mathcal A$ since the chain $N^\prime_0\subseteq N^\prime_1\subseteq\cdots$ is an $\mathcal A$-filtration of $N$.
\smallskip

\noindent Case 3. \emph{$\lambda$ a successor of a regular cardinal}: \, $\lambda = \nu^+$ for a regular cardinal $\nu$. For each $N_0$ from $\mathcal C_\lambda$ (not necessarily satisfying condition $(\ref{e:2})$ above), we easily build a continuous chain $\mathcal N_0 = (N^0_\alpha\mid\alpha<\nu)$ of modules from $\mathcal C_\nu \subseteq \mathcal A$ so that $N_0 \subseteq \bigcup\mathcal N_0$. Again, the union is in $\mathcal A$. We continue by choosing $N_1\in\mathcal C_\lambda$ containing this union, and a chain $\mathcal N_1 = (N^1_\alpha\mid\alpha<\nu)$ in $\mathcal C_\nu$ \st $N^0_\alpha\subseteq N^1_\alpha$ for all $\alpha<\nu$, and $N_1\subseteq\bigcup\mathcal N_1$. We proceed further by taking $N_2\in\mathcal C_\lambda$, etc. Clearly, $N = \bigcup _{i<\omega}N_i \in \mathcal C_\lambda$. 
Furthermore, the ascending chain 
$$\bigcup _{i<\omega} N^i_0\subseteq\bigcup _{i<\omega} N^i_1\subseteq\cdots\subseteq\bigcup _{i<\omega} N^i_\alpha\subseteq\cdots$$
of submodules from $\mathcal C_\nu$ forms an $\mathcal A$-filtration of $N$, showing that  $N \in \mathcal A$.
 
\smallskip

\noindent Case 4. \emph{$\lambda$ a successor of a singular cardinal}: \, $\lambda = \nu^+$ where $\mu = \hbox{cf}(\nu)<\nu$. We choose a strictly increasing continuous chain $(\nu _\alpha\mid\alpha<\mu)$ of infinite cardinals which is cofinal in $\nu$, such that $\nu_0>\mu$. Let $N_0$ be arbitrary module from the family $\mathcal C_\lambda$ given by Lemma~\ref{l:up}. We will produce a similar ascending chain as in Case 3, however this time, we have to pick the modules from different classes $\mathcal C_\delta$, hence we lose continuity. To overcome this problem, we use a well known singular compactness argument:

We gradually build the chains $\mathcal N_i = (N^i_\alpha\mid\alpha<\mu)$, for $i<\omega$, and pick the modules $N_i\in\mathcal C_\lambda$ in an alternating way, so that the following conditions are satisfied for all $i<\omega$:
\begin{enumerate}
\item[$(a)$] $N^i_\alpha\in\mathcal C_{\nu ^+_\alpha}$;
\item[$(b)$] $\bigcup \mathcal N_i\subseteq N_{i+1}$;
\item[$(c)$] the generators $\{n_{\alpha,\beta}^i\mid \beta<\nu_\alpha\}$ of the modules $N^i_\alpha$ are fixed;
\item[$(d)$] the generators $\{n_\gamma^i \mid \gamma<\nu\}$ of $N_i$ are fixed;
\item[$(e)$] $N^i_\alpha\supseteq\{n^{i-1}_{\delta,\beta} \mid \delta<\mu\;\&\; \beta<\hbox{min}(\nu _\delta, \nu _\alpha)\}\cup\{n_\gamma^i \mid \gamma<\nu _\alpha\}$.
\end{enumerate}

Then $\bigcup_{i<\omega}N_i$ is in $\mathcal C_\lambda$, and it is equal to the union of the chain $\mathcal H = (\bigcup _{i<\omega} N^i_\alpha \mid \alpha<\mu)$. However, this chain is continuous (by the condition $(e)$), and provides thus an $\mathcal A$-filtration of $\bigcup\mathcal H$.
\smallskip

Having constructed the families $\mathcal C_\lambda$ ($\lambda \leq \kappa$), we can use them to build an $\mathcal A$-filtration of $M$ consisting of $<\kappa$-presented modules. If $\kappa$ is regular, then it is easy to see that $\mathcal C_\kappa$ already contains an $\mathcal A$-filtration of $M$. For $\kappa$ singular, we apply \cite[Lemma 3.2]{SS}. By inductive hypothesis, we conclude that $M$ is $\mathcal A_0$-filtered.

It remains to show that the modules in $\mathcal A_0$ have countably presented first syzygies. We know from Lemma~\ref{l:kerepi} that their first syzygies are strict $\mathcal B$-stationary. In particular, the first syzygies are ${}_RR^c$-stationary. By \cite[Proposition 8.14(1)]{AH}, they are then ${}_RR$-Mittag-Leffler, and the claim follows from \cite[Corollary 5.3]{AH}. Finally, $\mathcal B$ is definable by \cite[Theorem 13.41]{GT}.
\end{proof}

We finish this section by examples of cotorsion pairs which satisfy the conditions of Theorem \ref{t:counttype}, but are not hereditary. 

Recall that for $n < \omega$, a module $M$ is an \emph{FP$_n$-module} provided that $M$ has a projective resolution 
$$\dots \to P_{k+1} \to P_k \to \dots \to P_1 \to P_0 \to M \to 0$$ 
such that $P_k$ is finitely generated for each $k \leq n$, \cite[\S VIII.4]{Br}. Let $\mathcal F \mathcal P _n$ denote the class of all FP$_n$-modules. (Notice that $\mathcal F \mathcal P _0$ is the class of all finitely generated modules, and $\mathcal F \mathcal P _1$ the class of all finitely presented ones.) The classes $( \mathcal F \mathcal P _n \mid n < \omega )$ form a decreasing chain whose intersection is the class of all strongly finitely presented modules, 
that is, the modules possessing a projective resolution consisting of finitely generated modules, see \cite[Proposition VII.4.5]{Br}.    

\medskip     
\begin{exm}\label{e:nonher} Let $n \geq 2$ and let $R_n$ be a ring such that there exists a module $M \in \mathcal F \mathcal P _n \setminus \mathcal F \mathcal P _{n+1}$. Such rings were constructed by Bieri and Stuhler using integral representation theory, see \cite[\S VIII.5]{Br} and the references therein; in fact, $R_n$ can be taken as the integral group ring $\mathbb{Z} G_n$ for a suitable finitely generated group $G_n$, and $M = \mathbb Z$. (For the particular case of $n = 2$, a different and simpler example of $R_2$ is constructed in \cite[Example 1.4]{KM}.)    

Let $\mathfrak C _n = (\mathcal A _n, \mathcal B _n)$ be the cotorsion pair generated by $\mathcal F \mathcal P _n$. Since $n \geq 2$, $\mathcal B _n$ is closed under direct limits by \cite[Lemma 6.6]{GT}, so Theorem \ref{t:counttype} applies. 

In order to see that $\mathfrak C _n$ is not hereditary, we observe that $\mathcal A _n$ is not closed under kernels of epimorphisms: indeed, the syzygy module $\Omega (M)$ does not belong to $\mathcal F \mathcal P _n$, and hence $\Omega (M) \notin \mathcal A _n$. To see the latter fact, notice that $\mathcal A _n$ consists of direct summands of $\mathcal F \mathcal P _n$-filtered modules by \cite[6.14]{GT}. Since $\Omega (M)$ is finitely generated, if $\Omega (M) \in \mathcal A _n$ then it is a direct summand of a finitely $\mathcal F \mathcal P _n$-filtered module, by \cite[Theorem 7.10]{GT} used for $\aleph_0$. However, $\mathcal F \mathcal P _n$ is closed under extensions and direct summands, whence $\Omega(M) \in \mathcal F \mathcal P _n$, a contradiction.    
\end{exm}


\medskip

\end{document}